\documentclass[a4paper,12pt]{article}

\usepackage[margin=3cm]{geometry}
\usepackage[pdftex]{graphicx}
\usepackage[utf8]{inputenc}
\usepackage{amsthm}
\usepackage{amsmath}
\usepackage{amssymb}
\usepackage[dvipsnames]{xcolor}
\usepackage{authblk}
\usepackage{enumerate}

\usepackage[parfill]{parskip}

\usepackage[color links=true, backref=page]{hyperref}

\usepackage{soul}

\usepackage[noend]{algpseudocode}
\usepackage{arydshln}
\usepackage{pythonhighlight}

\usepackage{subcaption}

\usepackage{tikz}
\usetikzlibrary{calc}
\usetikzlibrary{backgrounds}
\usetikzlibrary{shapes}
\usetikzlibrary{positioning}
\usetikzlibrary{decorations, decorations.pathmorphing, decorations.text}
%\usetikzlibrary{3d}
\usetikzlibrary{perspective}

\theoremstyle{plain}
\newtheorem{theorem}{Theorem}

\newtheorem{lemma}[theorem]{Lemma}

\theoremstyle{definition}
\newtheorem{definition}{Definition}

\newtheorem{remark}{Remark}

\newtheorem{algorithm}[theorem]{Algorithm}

\newcommand{\mc}[1]{\mathcal{#1}}
\newcommand{\ZZ}{\mathbb{Z}}

\newcommand{\bpm}{\begin{pmatrix}}
\newcommand{\epm}{\end{pmatrix}}

\title{Polycyclic Geometric Realizations of the Gray Configuration}
\author[1]{Leah Wrenn Berman}
\author[2]{G\'{a}bor G\'{e}vay}
\author[3,4,5]{Toma\v{z} Pisanski}

\affil[1]{Department of Mathematics \& Statistics,
University of Alaska Fairbanks,
Fairbanks, Alaska, USA}
\affil[2]{Bolyai Institute, University of Szeged, Szeged, Hungary}
\affil[3]{FAMNIT, University of Primorska, Koper, Slovenia}
\affil[4]{IAM, University of Primorska, Koper, Slovenia}
\affil[5]{Institute of Mathematics, Physics and Mechanics, Ljubljana, Slovenia}

\date{\empty}

\begin{document}

\maketitle

\begin{abstract}

The Gray configuration is a $(27_3)$ configuration which typically is realized as the points and lines of the $3 \times 3 \times 3$ integer lattice. It occurs as a member of an
infinite family of configurations defined by Bouwer in 1972. Since their discovery, both the Gray 
configuration and its Levi graph (i.e., its point-line incidence graph) have been the subject of 
intensive study. Its automorphism group contains cyclic subgroups isomorphic to  $\mathbb Z_3$ and $\mathbb 
Z_9$, so it is natural to ask whether the Gray configuration can be realized in the plane with any of the 
corresponding rotational symmetry. In this paper, we show that there are two distinct polycyclic realizations with $\mathbb{Z}_{3}$ symmetry. In contrast, the only geometric polycyclic realization with straight lines and $\mathbb{Z}_{9}$ symmetry is only a ``weak'' realization, with extra unwanted incidences (in particular, the realization is actually a $(27_{4})$ configuration).

\end{abstract}

%%%%%%%%%%%%%%%%%%%%%%%%%%%%%%%%%
\noindent
{\bf Keywords:} Gray graph, Gray grid, Levi graph, reduced Levi graph, semi-regular subgroup, Pappus graph, pseudoline realization.
\smallskip

\noindent
{\bf Math.\ Subj.\ Class.\ (2020):} 05B30, 05C10, 05C60, 51A45, 51E30.

\section{Introduction}

%%%%%%%%%%%%%%%%%%%%%%%%%%%%%%%%%

The \emph{Gray graph} was discovered by Marion C.\ Gray in 1932, and was rediscovered 
independently by Bouwer when searching for regular graphs that are edge-transitive but not 
vertex-transitive~\cite{Bou1972}; graphs fulfilling these conditions are called 
\emph{semisymmetric}~\cite{MoPi2007}), so henceforth we use this term. 
A first detailed study of semisymmetric graphs is due to Folkman~\cite{Fol1967}. The Gray graph, in 
particular, also became the subject of a careful investigation: the third author of this paper and his 
co-workers explored many interesting properties of this graph~\cite{MaPi2000, MaPiWi2005, 
MoPi2007, Pis2007}; see also~\cite[Chapter 6]{PisSer2013} (for more details about its history, 
see~\cite{MoPi2007}). We know that it is the smallest trivalent semisymmetric graph. Its girth is 
8, which is equivalent to saying that the Gray configuration is triangle-free. It is Hamiltonian, with a
Hamiltonian realization displaying a $\mathbb Z_9$ symmetry, as a construction due to Randi\'c reveals 
it~\cite{MaPi2000,PiRa2000}. A corresponding LCF notation is $[7, -7, 13, -13, 25, -25]^9$. 
This can be clearly seen in Figure~\ref{fig:GrayGraph}.
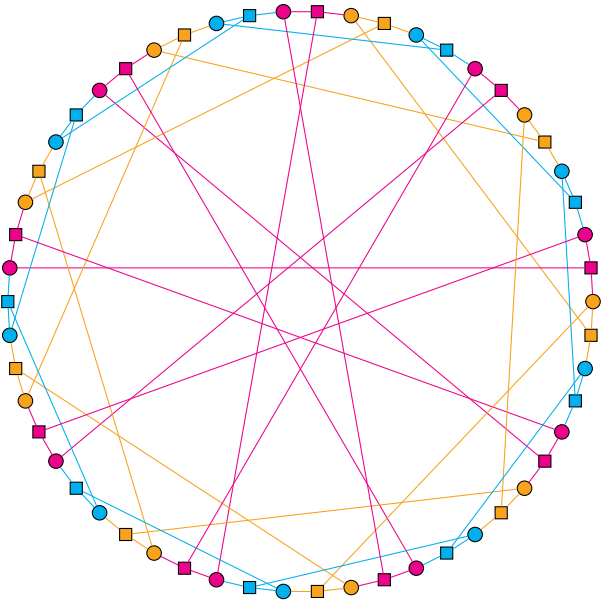
\begin{figure}[!h]
\begin{center}
\begin{tikzpicture}[vtx/.style={draw, circle, inner sep = 2.75 pt}, line/.style = {draw, inner sep = 3.25 pt}, scale = .7, transform shape]
\pgfmathsetmacro{\r}{5.5}
\newcommand{\colR}{Magenta}%red
\def\colG{ProcessBlue}%green!80!black
\newcommand{\colB}{YellowOrange}%blue
%% points
\foreach \i in {0,...,8}{\node[vtx, fill = \colB, %label={label distance = 1mm, ,font=\tiny}360*\i/9}:$\i$
] (B\i) at (360*\i/9:\r){};}
\foreach \i in {0,...,8}{\node[vtx, fill = \colG] (G\i) at (360*\i/9- 40/3:\r){};}
\foreach \i in {0,...,8}{\node[vtx, fill = \colR] (R\i) at (360*\i/9- 2*40/3:\r){};}
%% lines
\foreach \i in {0,...,8}{\node[line, fill = \colB%,  label=360*\i/9+360/54:$b_{\i}$
] (b\i) at (360*\i/9-360/54:\r){};}
\foreach \i in {0,...,8}{\node[line, fill = \colG] (g\i) at (360*\i/9- 40/3 - 360/54:\r){};}
\foreach \i in {0,...,8}{\node[line, fill = \colR] (r\i) at (360*\i/9- 2*40/3-360/54:\r){};}
%edges
\foreach \i in {0,..., 8}{
\draw[\colB] let \n1 = {int(mod(\i+2,9))} in (b\i) -- (B\n1);
\draw[\colB] let \n1 = {int(mod(\i+0,9))} in (b\i) -- (B\n1);
\draw[\colB] let \n1 = {int(mod(\i+0,9))} in (b\i) -- (G\n1);
\draw[\colG] let \n1 = {int(mod(\i+1,9))} in (g\i) -- (G\n1);
\draw[\colG] let \n1 = {int(mod(\i+0,9))} in (g\i) -- (G\n1);
\draw[\colG] let \n1 = {int(mod(\i+0,9))} in (g\i) -- (R\n1);
\draw[\colR] let \n1 = {int(mod(\i+4,9))} in (r\i) -- (R\n1);
\draw[\colR] let \n1 = {int(mod(\i+0,9))} in (r\i) -- (R\n1);
\draw[\colR] let \n1 = {int(mod(\i+8,9))} in (r\i) -- (B\n1);
}
\end{tikzpicture}
\caption{The Gray graph. The square nodes correspond to lines and the circular nodes correspond to points; the 
coloring demonstrates  \emph{rotational} $\mathbb{Z}_{9}$ symmetry. This is further explored in Section~\ref{sect:9fold}.}
\label{fig:GrayGraph}
\end{center}
\end{figure}

The \emph{Gray configuration} is a $(27_{3})$ configuration which occurs as a member of an infinite 
family of configurations defined by Bouwer in 1972~\cite[Section 1]{Bou1972}. Its name stems from 
the fact that its Levi graph---that is, the point-line incidence graph---is the Gray graph; see Figure~\ref{fig:GrayGraph}. 
(While it may appear that the graph has dihedral symmetry, this is not the case, because the mirror symmetry interchanges 
line-nodes and point-nodes, and while the individual color classes have mirror symmetry, the graph as a whole does not.)  
By the definition due to Bouwer, it can be realized as a spatial 
configuration consisting of the 27 points and 27 lines of the $3\times3\times3$ integer grid (cf.\ 
Figure~\ref{fig:GraySpatial}). It can also be conceived as the Cartesian product of three copies 
of the ``dual pencil'' $(3_1,1_3)$ configuration, or equivalently, the Cartesian product of the dual 
pencil $(3_1,1_3)$ and the $(9_2,6_3)$ ``grid configuration''~\cite{Gev2013}. Together with its 
dual, it forms a pair of the smallest configurations which are triangle-free and flag transitive but 
not self-dual~\cite{MaPiWi2005}. Moreover, it is \emph{resolvable}. By definition, this means that 
the set of configuration lines partitions into classes (called \emph{resolution classes} or \emph{parallel 
classes}) such that within each class, the lines partition the set of points of the configuration by 
incidences~\cite{Gev2019}. This is clearly seen in Figure~\ref{fig:GraySpatial}, since the parallel 
classes of the lines coincide with the parallel classes in geometric sense. The grid structure makes 
possible assigning labels to the configuration points of the form $xyz$ $(x,y,z\in\mathbb Z_3)$ such 
that two points with labels $xyz$ and $x'y'z'$ are incident to the same line if and only if precisely two 
of the equalities $x=x'$, $y=y'$, $z=z'$ hold.
\begin{figure}[!h]
\begin{center}
\includegraphics[width=0.33\textwidth]{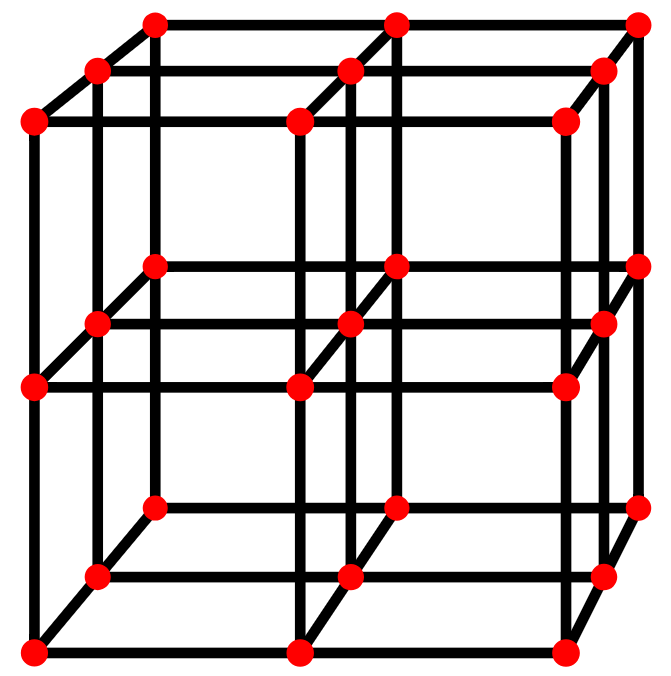}
\caption{Spatial realization of the Gray configuration. 
}
\label{fig:GraySpatial}
\end{center}
\end{figure}

A \emph{polycyclic geometric realization} of a configuration of points and lines is one in which (1) the 
combinatorial lines of the configuration are represented in the Euclidean plane using straight lines; 
(2) the points and lines are divided into symmetry classes in which each symmetry class contains 
the same number of elements. That is, it is a realization of the configuration in which a semi-regular 
subgroup of the automorphism group maps the realization to itself.

The main results of this paper are
\begin{itemize}
\item 
to show that the Gray configuration can be realized polycyclically in two different ways with 
$\mathbb{Z}_{3}$ symmetry, realized as geometric rotation;
\item 
to show that the Gray configuration can only be weakly polycyclically geometrically realized with $
\mathbb{Z}_{9}$ symmetry (any straight-line realization forces extra incidences), but it can be topologically realized as a polycyclic realization using pseudolines;
\item 
There are no other geometric polycyclic realizations of the Gray configuration.
\end{itemize}

%%%%%%%%%%%%%%%%%%%%%%%%%%%%%%%%%

\section{The automorphism group of the Gray configuration}

%%%%%%%%%%%%%%%%%%%%%%%%%%%%%%%%%

The automorphism group $\mathrm{Aut}\,G$ of the Gray graph is a group of order $1296 = 6^{4}$, 
which can be given in the following form:
\begin{equation} \label{eq:autG}
(S_3 \times S_3 \times S_3) \rtimes S_3 \cong S_3 \wr S_3,
\end{equation}
where $S_3$ is the symmetric group of degree 3. This has been established in the literature in a 
more general setting, see e.g.~\cite{Pis2007}. The automorphism group of the Gray configuration 
is the same, since the Gray configuration is not self-dual (so there are no color-exchanging 
automorphisms). Here we give an independent proof, focusing directly on simple geometric 
properties of the spatial realization of the configuration.

In this spatial realization, the configuration contains three pencils of parallel layers. Each of these layers 
forms a $(9_2, 6_3)$ ``grid" configuration. Within each parallel pencil there are three layers, and these 
pencils are perpendicular (say) to the $x$-, $y$-, respectively the $z$-axis of a Cartesian coordinate 
system (note that the labels introduced in the previous section can be conceived as coordinates 
with respect to such a coordinate system in the Euclidean 3-space). Each of the three copies of the 
$S_3$ group in the parenthesis of (\ref{eq:autG}) is responsible for permuting the layers within 
a given pencil (independently of each other, which explains why direct products are used). 
Note that this product in the parenthesis does not move one pencil to any other pencil. 
On the other hand, the second term of the semidirect product is responsible for permuting 
the three pencils, each as a whole (while leaving fixed the order of the layers within a pencil). 
In addition, it is easily seen that the group on the left (i.e.\ the triple direct product in the parenthesis) 
is an invariant subgroup of the entire group (while this does not hold for the right term). 
This explains why we use here semidirect product. Finally, it is well known that the semidirect 
product of the form above can be rewritten in the form of a wreath product of two copies of $S_3$.

%%%%%%%%%%%%%%%%%%%%%%%%%%%%%%%%

\section{Finding possible quotient graphs and reduced Levi graphs from semi-regular subgroups}

%%%%%%%%%%%%%%%%%%%%%%%%%%%%%%%%

Although we are interested mainly in  polycyclic configurations, we first briefly consider  the general case of  semi-regular groups. 

The automorphism group $\mathrm{Aut}\, G$ of a simple graph $G$ may be viewed as a group of permutations acting 
on the set of vertices of $G$. Its orbits induce a partition of the set of vertices of $G$. The action of a nontrivial 
subgroup $\Gamma \subseteq \mathrm{Aut}\, G$ is \emph{semi-regular} if all of its orbits have the same cardinality 
$|\Gamma|$. Note that this is equivalent to saying that the stabiliser of $\Gamma$ is trivial; see  \cite{DoMaMa}. 
For more information on this and what follows, see \cite{DoMaMa,PisSer2013}.

 A subgroup of $\text{Aut}\,G$ with a semi-regular action on the vertices of $G$ is called a \emph{semi-regular subgroup}. 
All nontrivial subgroups of a semi-regular group are semi-regular. In particular,  each nontrivial element $\alpha$ 
of a semi-regular group is a generator of a semi-regular cyclic group. Hence $\alpha$ is a \emph{semi-regular} permutation.
This is equivalent to saying that when writing a permutation $\alpha$ as a product of disjoint cycles, the length of all cycles is the same. 

For any simple graph $G$ the action of $\Gamma \subseteq \mathrm{Aut}\, G$ on the vertex set extends to the actions on the 
set of edges (that is, undirected pairs of adjacent vertices) and the set of arcs or darts (that is, directed pairs of adjacent vertices).
The action of a semi-regular subgroup can be described via quotient graphs.  If $\Gamma$ is a semi-regular subgroup, then the graph 
$G/\Gamma$ is the quotient graph, with vertices in $G/\Gamma$ corresponding to orbits of vertices of $G$ under the action of 
$\Gamma$, and edges (including parallel edges, semi-edges, loops) determined by orbits of arcs  of $G$, in the standard way, and 
the projection $\pi: G \rightarrow G/\Gamma$ is called a \emph{regular covering projection} (see \cite{GrTu,PisSer2013,DoMaMa}). 
In the special case of a cyclic group $\Gamma$, we may consider its generator permutation $\alpha$.  If $\alpha$ is a semi-regular 
automorphism, then the graph $G/\alpha$ is the quotient graph, with vertices in $G/\alpha$ corresponding to orbits of vertices of 
$G$ under the action of $\alpha$, etc.

\begin{definition}
A simple graph $G$ of order $n$ admitting a semi-regular automorphism $\alpha$ of order $m$ is \emph{polycirculant}. If $m = n$,
the automorphism $\alpha$ is \emph{regular}.
\end{definition} 

\begin{remark}
Note that in the Definition above, $m$ divides $n$, and the quotient graph $G/\alpha$ has order $k$, where $n = mk$.
\end{remark}

The renowned theorem of 
Sabidussi~\cite{Sab1958} states that a graph $G$ is a Cayley graph if and only if $\mathrm{Aut}\, G$ admits a 
subgroup with regular action, which corresponds to the quotient graph $G/\Gamma$ having only a single vertex.

\begin{remark}
Note that a semi-regular automorphism of a simple graph acts semi-regularly on vertices and arcs of a graph but not 
necessarily on the edges of a graph. The easiest way to check whether the action on the edges is semi-regular is via 
quotient graphs; the action is semi-regular on the edges if and only if the quotient graph has no semi-edges.
\end{remark}

Given a graph $G$, it is possible to determine all distinct regular covering projections  
$\pi: G \rightarrow G/\Gamma$. The recipe is as follows.

First we generate all semi-regular group actions up to conjugacy. This can be easily done by 
SageMath/python/GAP, which has all ingredients readily available.
\bigskip

\begin{python}
def generate_semi_regular_actions(G):
    """Given graph G, generate all semi-regular group actions."""
    AutG = G.automorphism_group()
    for Gamma in AutG.conjugacy_classes_subgroups(): 
    	if Gamma.is_semi_regular():
    		yield Gamma
\end{python}

Using Sage \cite{sagemath} and the above algorithm, we determined that the Gray graph has 5 conjugacy classes of subgroups which produce bipartite quotient 
graphs; the groups are listed in Table \ref{tab:subgroups} and the corresponding quotient graphs in Figure \ref{fig:quotients}.
Note that the Gray graph is unusual in the sense that all quotients arising from semi-regular actions are bipartite!

\begin{table}[!h]
\caption{Semi-regular subgroups of $\mathrm{Aut}\, G $ for the Gray graph.}
\begin{center}
\begin{tabular}{r r l r}
ID & order & subgroup isomorphic to... & bipartite?\\ 
1& 3 & $\mathbb{Z}_{3}$& YES\\
2 & 3 & $\mathbb{Z}_{3}$& YES\\
3 & 9& $\mathbb{Z}_{3} \times \mathbb{Z}_{3}$& YES\\
4 & 9 & $\mathbb{Z}_{9}$& YES\\
5 & 27 & $\mathbb{Z}_{9}\times \mathbb{Z}_{3}$& YES\\
\end{tabular}
\end{center}
\label{tab:subgroups}
\end{table}

\bigskip\bigskip

\begin{figure}[!h]
\begin{center}
\begin{subfigure}[t]{.3\linewidth}
\begin{center}
\includegraphics[width=4cm]{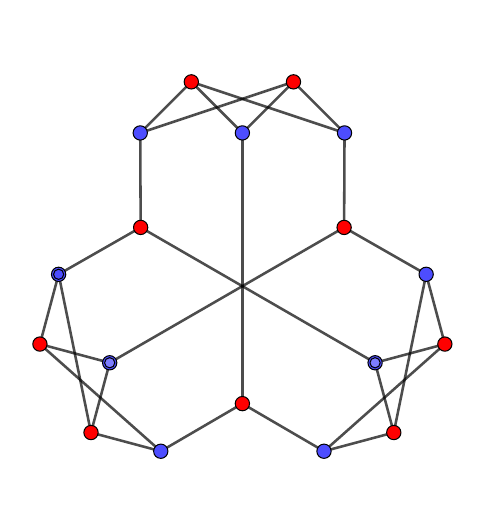}
\end{center}
\caption{Quotient with respect to $\ZZ_{3}$; the graph $GG$ (ID\#1)\label{fig:GG}}
\label{fig:grayZ3-GG}
\end{subfigure}
\hfill 
\begin{subfigure}[t]{.3\linewidth}
\centering
\includegraphics[width=5cm]{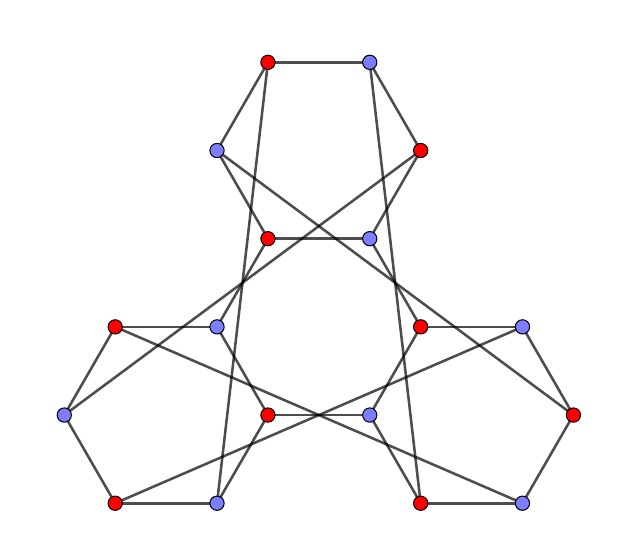}
\caption{Quotient with respect to $\ZZ_{3}$; the Pappus graph (ID\#2)\label{fig:Pappus}}
\label{fig:grayZ3-Pappus}
\end{subfigure}
\hfill
%Another quotient with respect to $\ZZ_3$ action is the Pappus graph.
\begin{subfigure}[t]{.3\linewidth}
\centering
\includegraphics[width=2.8cm]{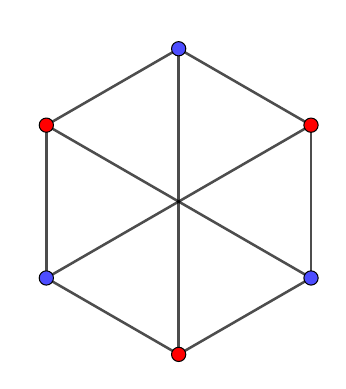}
\caption{Quotient with respect to $\ZZ_{3}\times \ZZ_{3}$ (ID\#3)\label{fig:K33}}
\label{fig:grayZ3xZ3}
\end{subfigure}

%Quotient with respect to $\ZZ_3 \times \ZZ_3$. 
\begin{subfigure}[t]{.3\linewidth}
\includegraphics[width=3cm]{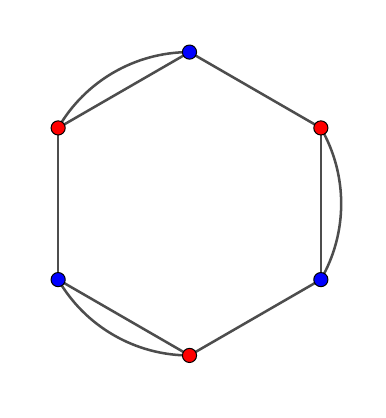}
\caption{Quotient with respect to $\ZZ_{9}$ (ID\#4)}
\label{fig:grayZ9}
\end{subfigure}
%Another quotient with respect to $\ZZ_9$ action.
\begin{subfigure}[t]{.3\linewidth}
\includegraphics[width=3cm]{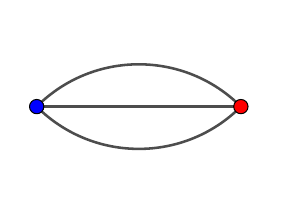}
\caption{Quotient with respect to $\ZZ_9 \rtimes \ZZ_3$. (ID\#5)}
\label{fig:grayZ9xZ3}
\end{subfigure}

\caption{The five semi-regular quotient graphs of the Gray graph,  and their IDs from Table
\ref{tab:subgroups}.
Since all quotients are bipartite, these graphs are all the possible RLGs of the Gray configuration.}
\label{fig:quotients}
\end{center}
\end{figure}

Given a configuration, the incidence graph of the configuration, usually called a \emph{Levi graph}, is formed by assigning one vertex of the graph to each point and line of the configuration, and joining vertices with edges if and only if the corresponding point and line are incident. 

\begin{definition}A geometric realization of a configuration is \emph{polycyclic} if there exists a semi-regular color-preserving automorphism $\alpha$ of the Levi graph $G$ that is realized by geometric rotation of the same order.
\end{definition}

A \emph{reduced Levi graph} (RLG) is a bipartite quotient of the Levi graph with a semi-regular subgroup of the automorphism group. We use the convention that arcs in reduced Levi graphs are directed from line-orbits to point-orbits. If you reverse all arrows in a reduced Levi graph and switch the interpretation of colors of nodes, the result is the reduced Levi graph of the dual configuration.

In the remainder of the paper, 
we determine which of the quotients of the Gray graph can be the reduced Levi graphs of polycyclic realizations of the Gray configuration, and we determine whether such geometric realizations exist.

%%%%%%%%%%%%%%%%%%%%%%%%%%%%%%%%%%%%%%%%%%%%%%%

\section{Labeling elements of the Gray configuration and identifying the reduced Levi graphs}

%%%%%%%%%%%%%%%%%%%%%%%%%%%%%%%%%%%%%%%%%%%%%%%

In what follows, we use the following labeling conventions. We label the points of the $3\times 3$ 
integer grid using the labels $ijk$, $i,j,k = 0,1,2$ corresponding to the axes in $\mathbb{R}^{3}$, 
with the first, second, third coordinates corresponding to left-right, down-up, and front-back 
respectively.

\newcommand{\myast}{\ensuremath{*}}

The lines of the configuration are labeled %$x_{ij}$, $y_{ij}$, $z_{ij}$, where $x_{ij} = 
$* ij = \{0ij, 1ij, 2ij\}$, %$y_{ij} = 
$i \!*\! j = \{i0j, i1j, i2j\}$, and %$z_{ij} = 
$ij* = \{ij0, ij1, ij2\}$. See Figure~\ref{fig:GrayGrid}.

\def\lw{.6}
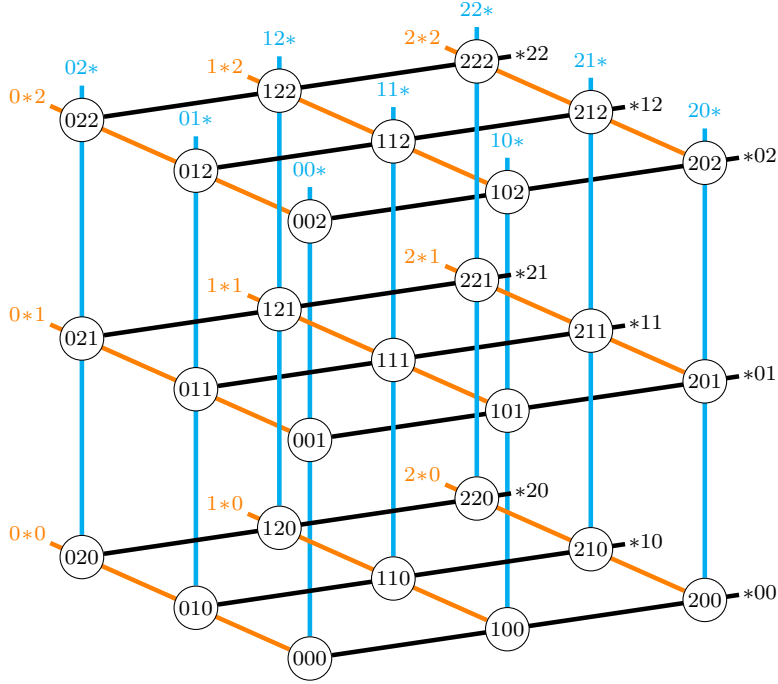
\begin{figure}[!h]
\begin{center}

\begin{tikzpicture}  [scale = 1,
  3d view,perspective
,
 %isometric view
 ]  
\foreach \i in {0,1,2}{
\foreach \j in {0,1,2}{
\foreach \k in {0,1,2}{
\node[draw, circle, font = \scriptsize, fill = white, inner sep = 1pt] (\i\j\k) at ($3*(\i,\j,\k)$) {$\i\j\k$};
}}}

\begin{scope}[on background layer]
\foreach \i in {0,1,2}{
\foreach \j in {0,1,2}{
%\foreach \k in {0,1,2}{
\draw [cyan,shorten >=-.75cm, line width = \lw mm] (\i\j0) -- (\i\j2) node[font = \scriptsize, pos = 1.19] {$\i\j*$};%{$z_{\i\j}$};
%\draw[blue] ($(\i\j0)!1.2!(\i\j2)$) node[font = \tiny,  ] {$z_{\i\j}$};
\draw [orange,shorten >=-.75cm, line width = \lw mm] (\i0\j) -- (\i2\j) node[font = \scriptsize, pos = 1.4] {$\i\!*\!\j$};%{$y_{\i\j}$};
%\draw[red] ($(\i0\j)!1.11!(\i2\j)$) node[font = \tiny, above ] {$y_{\i\j}$};
\draw [,shorten >=-.75cm, line width = \lw mm] (0\i\j) -- (2\i\j) node[font = \scriptsize, pos = 1.22] {$*\i\j$};%{$x_{\i\j}$};
%\draw[green!60!black] ($(0\i\j)!1.22!(2\i\j)$) node[font = \tiny, left ] {$x_{\i\j}$};
}}
\end{scope}
\end{tikzpicture}

\caption
{The Gray Grid: the points and lines of the Gray configuration, viewed as points and lines on the 
$3 \times 3$ integer grid.}
\label{fig:GrayGrid}
\end{center}
\end{figure}

The labels, separated into symmetry classes, are shown in Table \ref{tab:SymmetryClassLabels}. Note that going from $R_{ij}$ to $R_{(i+1)j}$ ($B, G$ respectively) 
corresponds to adding 111, and going from $R_{ij}$ to $R_{i(j+1)}$ corresponds to adding 210 (with index arithmetic and point arithmetic all happening mod 3). 

For the lines, adding an index also corresponds to adding 111 or 210 respectively, but ignoring the * component (that is, $*+i = *$ for $i = 0,1,2$). 
For example, $X_{11} + 210 = *21 + 210 = (*+2)(2+1)(1+0) = *01 = X_{12}$ and $Y_{10} + 111 = 2* 1+111 = (2+1)(*+1)(1+1) = 0* 2 = Y_{20}$.

\begin{table}[htp]
\caption{The elements of the Gray configuration, labeled according to symmetry class over $\mathbb{Z}_{3} \times \mathbb{Z}_{3}$}
\begin{center}
\begin{align*}
\textcolor{red}{
\begin{array}{c c c}
R_{00} = 000 & R_{01} = 210 & R_{02} = 120 \\ 
R_{10} = 111 & R_{11} = 021 & R_{12} = 201 \\
R_{20} = 222 & R_{21} = 102 & R_{22} = 012 
\end{array}
}
&\quad
\begin{array}{c c c}
X_{00} = *00 & X_{01} = *10 & X_{02} = *20 \\ 
X_{10} = *11 & X_{11} = *21 & X_{12} = *01 \\
X_{20} = *22 & X_{21} = *02 & X_{22} = *12 
\end{array}
\\
\textcolor{blue}{
\begin{array}{c c c}
B_{00} = 100 & B_{01} = 010 & B_{02} = 220 \\ 
B_{10} = 211 & B_{11} = 121 & B_{12} = 001 \\
B_{20} = 022 & B_{21} = 202 & B_{22} = 112 
\end{array}
}
&\quad
\textcolor{orange}{
\begin{array}{c c c}
Y_{00} = 1\!*\! 0 & Y_{01} = 0\!*\! 0 & Y_{02} = 2\!*\! 0 \\ 
Y_{10} = 2\!*\! 1 & Y_{11} = 1\!*\! 1 & Y_{12} = 0\!*\! 1 \\
Y_{20} = 0\!*\! 2 & Y_{21} = 2\!*\! 2 & Y_{22} = 1\!*\! 2 
\end{array}
}\\
\textcolor{green!60!black}{
\begin{array}{c c c}
G_{00} = 110 & G_{01} = 020 & G_{02} = 200 \\ 
G_{10} = 221 & G_{11} = 101 & G_{12} = 011 \\
G_{20} = 002 & G_{21} = 212 & G_{22} = 122 
\end{array}
}
&\quad
\textcolor{cyan}{
\begin{array}{c c c}
Z_{00} = 11* & Z_{01} = 02* & Z_{02} = 20* \\ 
Z_{10} = 22* & Z_{11} = 10* & Z_{12} = 01* \\
Z_{20} = 00* & Z_{21} = 21* & Z_{22} = 12* 
\end{array}
}
\end{align*}

\end{center}
\label{tab:SymmetryClassLabels}
\end{table}%

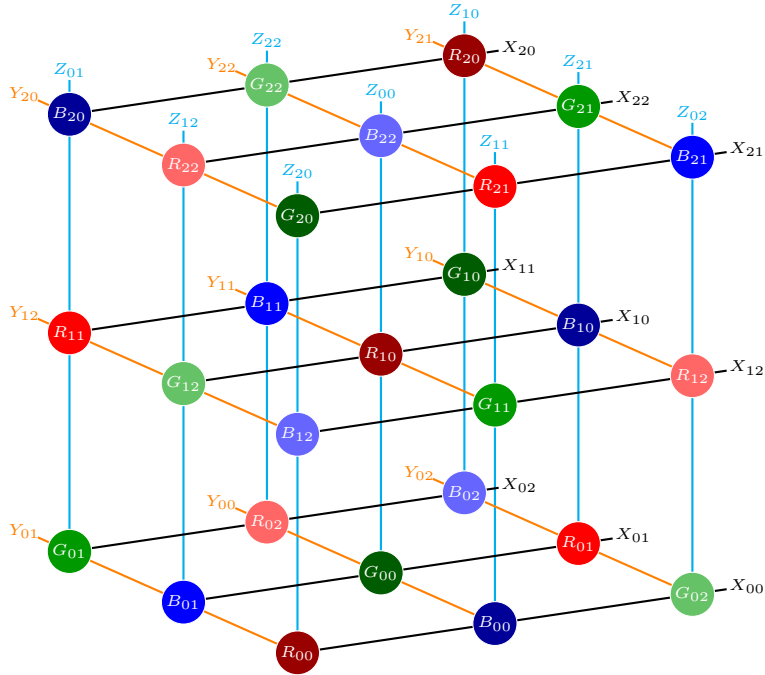
\begin{figure}[!h]
\begin{center}
\begin{tikzpicture}[vtx/.style={draw, circle, font = \tiny, fill = white, inner sep = 1pt},
  3d view,perspective
,
 ]  
\node[vtx,white, fill=red!60!black,] (R00) at ($3*(0,0,0)$) {$R_{00}$};
\node[vtx, white, fill=red!60!black] (R10) at ($3*(1,1,1)$) {$R_{10}$};
\node[vtx, white, fill=red!60!black] (R20) at ($3*(2,2,2)$) {$R_{20}$};
\node[vtx, white, fill=red] (R01) at ($3*(2,1,0)$) {$R_{01}$};
\node[vtx, white, fill=red] (R11) at ($3*(0,2,1)$) {$R_{11}$};
\node[vtx, white, fill=red] (R21) at ($3*(1,0,2)$) {$R_{21}$};
\node[vtx, white, fill=red!60!white] (R02) at ($3*(1,2,0)$) {$R_{02}$};
\node[vtx, white, fill=red!60!white] (R12) at ($3*(2,0,1)$) {$R_{12}$};
\node[vtx, white, fill=red!60!white] (R22) at ($3*(0,1,2)$) {$R_{22}$};

\node[vtx,white, fill=blue!60!black,] (B00) at ($3*(1,0,0)$) {$B_{00}$};
\node[vtx, white, fill=blue!60!black] (B10) at ($3*(2,1,1)$) {$B_{10}$};
\node[vtx, white, fill=blue!60!black] (B20) at ($3*(0,2,2)$) {$B_{20}$};
\node[vtx, white, fill=blue] (B01) at ($3*(0,1,0)$) {$B_{01}$};
\node[vtx, white, fill=blue] (B11) at ($3*(1,2,1)$) {$B_{11}$};
\node[vtx, white, fill=blue] (B21) at ($3*(2,0,2)$) {$B_{21}$};
\node[vtx, white, fill=blue!60!white] (B02) at ($3*(2,2,0)$) {$B_{02}$};
\node[vtx, white, fill=blue!60!white] (B12) at ($3*(0,0,1)$) {$B_{12}$};
\node[vtx, white, fill=blue!60!white] (B22) at ($3*(1,1,2)$) {$B_{22}$};

\node[vtx,white, fill=green!60!black!60!black,] (G00) at ($3*(1,1,0)$) {$G_{00}$};
\node[vtx, white, fill=green!60!black!60!black] (G10) at ($3*(2,2,1)$) {$G_{10}$};
\node[vtx, white, fill=green!60!black!60!black] (G20) at ($3*(0,0,2)$) {$G_{20}$};
\node[vtx, white, fill=green!60!black] (G01) at ($3*(0,2,0)$) {$G_{01}$};
\node[vtx, white, fill=green!60!black] (G11) at ($3*(1,0,1)$) {$G_{11}$};
\node[vtx, white, fill=green!60!black] (G21) at ($3*(2,1,2)$) {$G_{21}$};
\node[vtx, white, fill=green!60!black!60!white] (G02) at ($3*(2,0,0)$) {$G_{02}$};
\node[vtx, white, fill=green!60!black!60!white] (G12) at ($3*(0,1,1)$) {$G_{12}$};
\node[vtx, white, fill=green!60!black!60!white] (G22) at ($3*(1,2,2)$) {$G_{22}$};

\begin{scope}[on background layer]

\draw [thick,cyan,shorten >=-.75cm] (110) -- (112) node[font = \tiny, pos = 1.17] {$Z_{00}$};
\draw [thick,cyan,shorten >=-.75cm] (020) -- (022) node[font = \tiny, pos = 1.17] {$Z_{01}$};
\draw [thick,cyan,shorten >=-.75cm] (200) -- (202) node[font = \tiny, pos = 1.17] {$Z_{02}$};

\draw [thick,cyan,shorten >=-.75cm] (220) -- (222) node[font = \tiny, pos = 1.17] {$Z_{10}$};
\draw [thick,cyan,shorten >=-.75cm] (100) -- (102) node[font = \tiny, pos = 1.17] {$Z_{11}$};
\draw [thick,cyan,shorten >=-.75cm] (010) -- (012) node[font = \tiny, pos = 1.17] {$Z_{12}$};

\draw [thick,cyan,shorten >=-.75cm] (000) -- (002) node[font = \tiny, pos = 1.17] {$Z_{20}$};
\draw [thick,cyan,shorten >=-.75cm] (210) -- (212) node[font = \tiny, pos = 1.17] {$Z_{21}$};
\draw [thick,cyan,shorten >=-.75cm] (120) -- (122) node[font = \tiny, pos = 1.17] {$Z_{22}$};

\draw [thick, orange,shorten >=-.75cm] (100) -- (120) node[font = \tiny, pos = 1.35] {$Y_{00}$};
\draw [thick, orange,shorten >=-.75cm] (000) -- (020) node[font = \tiny, pos = 1.35] {$Y_{01}$};
\draw [thick, orange,shorten >=-.75cm] (200) -- (220) node[font = \tiny, pos = 1.35] {$Y_{02}$};

\draw [thick, orange,shorten >=-.75cm] (201) -- (221) node[font = \tiny, pos = 1.35] {$Y_{10}$};
\draw [thick, orange,shorten >=-.75cm] (101) -- (121) node[font = \tiny, pos = 1.35] {$Y_{11}$};
\draw [thick, orange,shorten >=-.75cm] (001) -- (021) node[font = \tiny, pos = 1.35] {$Y_{12}$};

\draw [thick, orange,shorten >=-.75cm] (002) -- (022) node[font = \tiny, pos = 1.35] {$Y_{20}$};
\draw [thick, orange,shorten >=-.75cm] (202) -- (222) node[font = \tiny, pos = 1.35] {$Y_{21}$};
\draw [thick, orange,shorten >=-.75cm] (102) -- (122) node[font = \tiny, pos = 1.35] {$Y_{22}$};

\draw [thick,shorten >=-.75cm] (000) -- (200) node[font = \tiny, pos = 1.22] {$X_{00}$};
\draw [thick,shorten >=-.75cm] (010) -- (210) node[font = \tiny, pos = 1.22] {$X_{01}$};
\draw [thick,shorten >=-.75cm] (020) -- (220) node[font = \tiny, pos = 1.22] {$X_{02}$};

\draw [thick,shorten >=-.75cm] (011) -- (211) node[font = \tiny, pos = 1.22] {$X_{10}$};
\draw [thick,shorten >=-.75cm] (021) -- (221) node[font = \tiny, pos = 1.22] {$X_{11}$};
\draw [thick,shorten >=-.75cm] (001) -- (201) node[font = \tiny, pos = 1.22] {$X_{12}$};

\draw [thick,shorten >=-.75cm] (022) -- (222) node[font = \tiny, pos = 1.22] {$X_{20}$};
\draw [thick,shorten >=-.75cm] (002) -- (202) node[font = \tiny, pos = 1.22] {$X_{21}$};
\draw [thick,shorten >=-.75cm] (012) -- (212) node[font = \tiny, pos = 1.22] {$X_{22}$};

\end{scope}

\end{tikzpicture}
\caption{The Gray Grid labeled with the symmetry classes from Table \ref{tab:SymmetryClassLabels}. }
\label{fig:GridWithZ3xZ3}
\end{center}
\end{figure}

Figure~\ref{fig:GridWithZ3xZ3} shows the Gray Grid using the labels from Table 
\ref{tab:SymmetryClassLabels}. This choice of labeling corresponds to the reduced Levi graph over 
$\mathbb{Z}_{3} \times \mathbb{Z}_{3}$ shown in Figure \ref{fig:Z3xZ3RLG} (see also Figure \ref{fig:K33}), where the first copy 
of $\mathbb{Z}_{3}$ corresponds to increasing the row index in Table 
\ref{tab:SymmetryClassLabels} (that is, to adding $+111$) and the second copy of $\mathbb{Z}_{3}
$ corresponds to increasing the column index (that is, to adding $+210$). 

In Figure~\ref{fig:Z3xZ3RLG},
the reduced Levi graph is oriented so that all arrows go from line classes to point classes, as mentioned 
above. Voltages are indicated as ordered pairs $(a,b) \in \mathbb{Z}_{3} \times \mathbb{Z}_{3}$, 
where $L \xrightarrow{(a,b)} P$ corresponds to an edge between $L_{ij}$ and $P_{(i+a)(j+b)}$ in 
the unreduced Levi graph, for $P \in \{R,G,B\}$, $L \in \{X,Y,Z\}$, $i,j \in \{0,1,2\}$. Unlabeled edges 
have voltage $(0,0)$.  Incrementing the first coordinate corresponds to increasing the row index 
(that is, to adding $+111$) and incrementing the second coordinate corresponds to increasing the column index (that is, to adding $+210$).

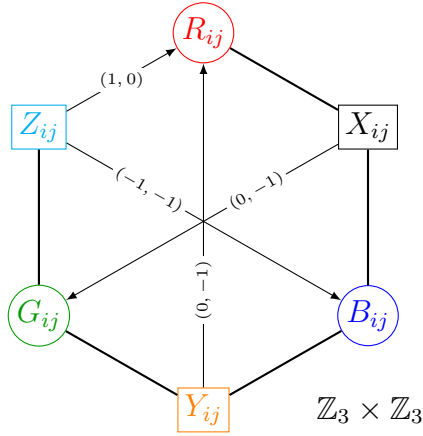
\begin{figure}[!h]
\begin{center}
\begin{tikzpicture}[vtx/.style={draw, circle, inner sep = 1 pt}, lin/.style = {draw, inner sep = 2.5 pt}, lbl/.style={midway, fill = white, inner sep = 1.5 pt, font = \tiny}]
\def\r{2.5}
\node[vtx, red] (R) at (90:\r) {$R_{ij}$};
\node[lin, cyan] (Z) at (90+60:\r) {$Z_{ij}$};
\node[vtx, green!60!black] (G) at  (90+2*60:\r) {$G_{ij}$};
\node[lin, , orange] (Y) at  (90+3*60:\r) {$Y_{ij}$};
\node[vtx, blue] (B) at  (90+4*60:\r) {$B_{ij}$};
\node[lin, ] (X) at  (90+5*60:\r) {$X_{ij}$};

\draw[thick] (X) -- (R);
\draw[thick] (X)--(B);
\draw[thick] (Y) -- (B);
\draw[thick] (Y) -- (G);
\draw[thick] (Z) -- (G);
\draw[-latex] (Z) -- node[lbl]{$(1,0)$} (R);
\draw[-latex] (Y) -- node[lbl, pos = .3, rotate = 90]{$(0,-1)$} (R);
\draw[-latex] (X) -- node[lbl, pos = .3, rotate = 30]{$(0,-1)$} (G);
\draw[-latex] (Z) -- node[lbl, pos = .3, rotate = -30]{$(-1,-1)$} (B);

\node[right = of Y] {$\mathbb{Z}_{3}\times \mathbb{Z}_{3}$};
\end{tikzpicture}
\caption{The reduced Levi graph for the Gray configuration with voltage group $\mathbb{Z}_{3} \times \mathbb{Z}_{3}$. }
\label{fig:Z3xZ3RLG}
\end{center}
\end{figure}

Expanding the second copy of $\mathbb{Z}_{3}$ (that is, having symmetry classes 
$\Box_{a0}$, $\Box_{a1}$, $\Box_{a2}$, $a \in \mathbb{Z}_{3}$, $\Box \in \{X,Y,Z,R,G,B\}$) 
gives a quotient whose underlying graph is isomorphic to the Pappus graph (see Figure \ref{fig:Pappus}).

\begin{figure}[htbp]
\begin{center}

\begin{tikzpicture}[vtx/.style={draw, circle, inner sep = 1 pt, font = \tiny}, lin/.style = {draw, inner sep = 2.5 pt, font = \tiny}, lbl/.style={midway, fill = white, 
inner sep = 1.5 pt, font = \tiny}]
\def\r{1.5}
\def\s{1}
\foreach\i in {0,1,2}{  
\node[vtx, red]  (R\i)  at ($(90:\r+\s*\i)$) {$R_{ j \i }$};
\node[lin, cyan] (Z\i) at (90+60:\r+\s*\i) {$Z_{j \i }$};
\node[vtx, green!60!black] (G\i) at  (90+2*60:\r+\s*\i) {$G_{j \i }$};
\node[lin, , orange] (Y\i) at  (90+3*60:\r+\s*\i) {$Y_{j \i }$};
\node[vtx, blue] (B\i) at  (90+4*60:\r+\s*\i) {$B_{j \i }$};
\node[lin, ] (X\i) at  (90+5*60:\r+\s*\i) {$X_{ j \i }$};
}

\foreach \i in {0,1,2}{
\draw[thick] (X\i) -- (R\i);
\draw[thick] (X\i)--(B\i);
\draw[thick] (Y\i) -- (B\i);
\draw[thick] (Y\i) -- (G\i);
\draw[thick] (Z\i) -- (G\i);
\draw[-latex]  (Z\i) -- node[lbl]{$1$} (R\i);
\draw[-latex] let \n1 = {int(mod(\i+2,3))} in (Y\i) to[bend left = 30] (R\n1);
\draw[-latex] let \n1 = {int(mod(\i+2,3))} in (X\i) to[bend left = 30]  (G\n1);
\draw[-latex]let \n1 = {int(mod(\i+2,3))} in (Z\i) to[bend left = 30] node[lbl, pos = .9, rotate = 0]{$-1$} (B\n1);
}
\node[right =  of Y2] {$\mathbb{Z}_{3}$};
\end{tikzpicture}
$\cong$ \hfill
\begin{tikzpicture}[vtx/.style={draw, circle, inner sep = 1 pt, font = \tiny}, lin/.style = {draw, inner sep = 2.5 pt, font = \tiny}, lbl/.style={above, fill = white, 
inner sep = 1.5 pt, font = \tiny}]
\def\r{1.5}
\def\s{1}
\foreach\i in {0,1,2}{  
\node[vtx, red]  (R\i)  at ($(90+\i*180:\r+\s*\i)$) {$R_{j \i }$};
\node[lin, cyan] (Z\i) at (90+60+\i*180:\r+\s*\i) {$Z_{j \i }$};
\node[vtx, green!60!black] (G\i) at  (90+2*60+\i*180:\r+\s*\i) {$G_{j \i }$};
\node[lin, , orange] (Y\i) at  (90+3*60+\i*180:\r+\s*\i) {$Y_{j \i }$};
\node[vtx, blue] (B\i) at  (90+4*60+\i*180:\r+\s*\i) {$B_{j \i }$};
\node[lin, ] (X\i) at  (90+5*60+\i*180:\r+\s*\i) {$X_{j \i }$};
}

\foreach \i in {0,1,2}{
\draw[thick] (X\i) -- (R\i);
\draw[thick] (X\i)--(B\i);
\draw[thick] (Y\i) -- (B\i);
\draw[thick] (Y\i) -- (G\i);
\draw[thick] (Z\i) -- (G\i);
\draw[-latex]  (Z\i) -- node[lbl]{$1$} (R\i);
\draw[-latex] let \n1 = {int(mod(\i+2,3))} in (Y\i) to[bend left = 30] (R\n1);
\draw[-latex] let \n1 = {int(mod(\i+2,3))} in (X\i) to[bend left = 30]  (G\n1);
\draw[-latex]let \n1 = {int(mod(\i+2,3))} in (Z\i) to[bend left = 30] node[lbl, pos = .9, rotate = 0]{$-1$} (B\n1);
}
\node[right =  of Y2] {$\mathbb{Z}_{3}$};
\end{tikzpicture}
\caption{Expanding the second copy of $\mathbb{Z}_{3}$ (that is, having symmetry classes 
$\Box_{a0}$, $\Box_{a1}$, $\Box_{a2}$, $a \in \mathbb{Z}_{3}$, $\Box \in \{X,Y,Z,R,G,B\}$) 
gives a quotient whose underlying graph is isomorphic to the Pappus graph. 
Two realizations are shown, one where the expansion is obvious and one where
 the underlying graph uses a more standard realization of the Pappus graph.}
\label{fig:PappusRLG}
\end{center}
\end{figure}
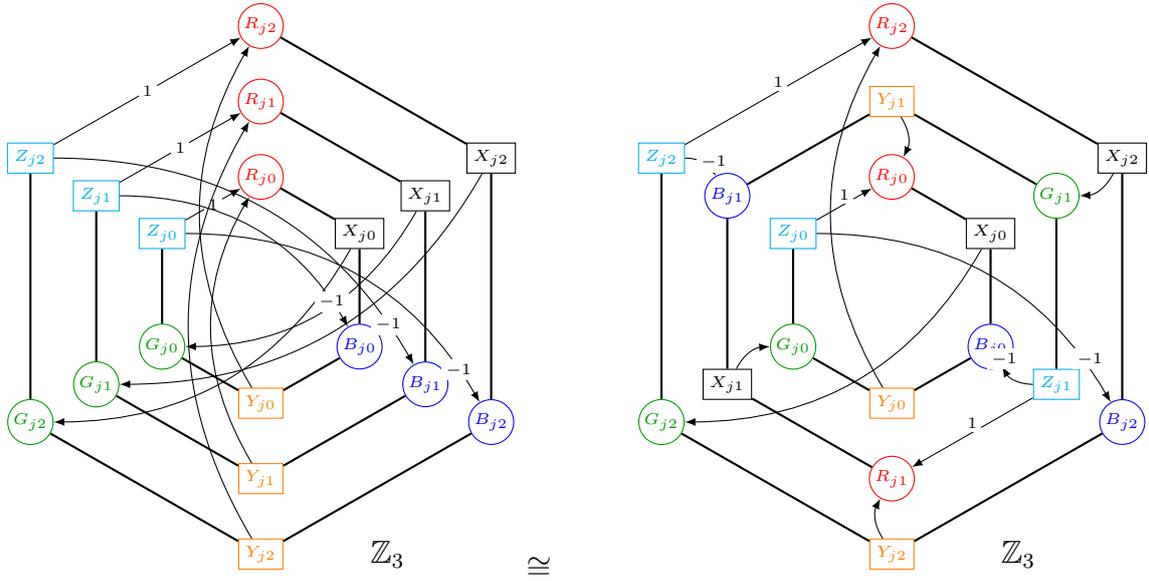

Expanding the first copy of $\mathbb{Z}_{3}$ (that is, having symmetry classes 
$\Box_{0j}$, $\Box_{1j}$, $\Box_{2j}$, $j \in \mathbb{Z}_{3}$, $\Box \in \{X,Y,Z,R,G,B\}$) 
gives a quotient isomorphic to the graph $GG$ shown in Figure \ref{fig:GG}. 

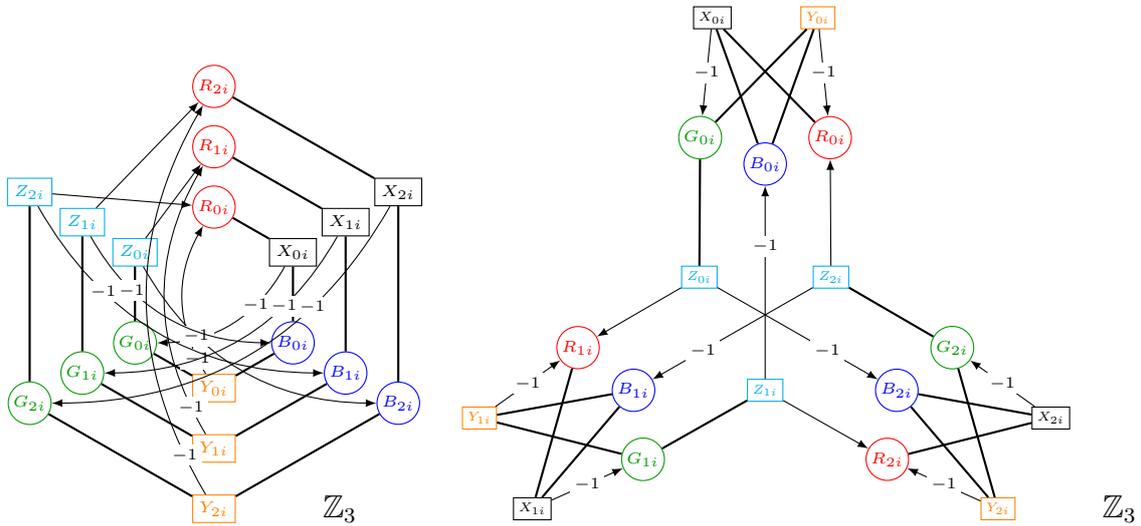
\begin{figure}[htbp]
\begin{center}

\begin{tikzpicture}[vtx/.style={draw, circle, inner sep = 1 pt, font = \tiny}, lin/.style = {draw, inner sep = 2.5 pt, font = \tiny}, lbl/.style={midway, fill = white, 
inner sep = 1.5 pt, font = \tiny}, scale = .8]
\def\r{1.5}
\def\s{1}
\foreach \j in {0,1,2}{  
\node[vtx, red]  (R\j)  at ($(90:\r+\s*\j)$) {$R_{ \j i}$};
\node[lin, cyan] (Z\j) at (90+60:\r+\s*\j) {$Z_{ \j i}$};
\node[vtx, green!60!black] (G\j) at  (90+2*60:\r+\s*\j) {$G_{ \j i}$};
\node[lin, , orange] (Y\j) at  (90+3*60:\r+\s*\j) {$Y_{ \j i}$};
\node[vtx, blue] (B\j) at  (90+4*60:\r+\s*\j) {$B_{ \j i}$};
\node[lin, ] (X\j) at  (90+5*60:\r+\s*\j) {$X_{ \j i}$};
}

\foreach \j in {0,1,2}{
\draw[thick] (X\j) -- (R\j);
\draw[thick] (X\j)--(B\j);
\draw[thick] (Y\j) -- (B\j);
\draw[thick] (Y\j) -- (G\j);
\draw[thick] (Z\j) -- (G\j);

\draw[-latex] let \n1 = {int(mod(\j+1,3))} in (Z\j) --  (R\n1);
\draw[-latex] (Y\j) to[bend left = 30] node[lbl, pos = .1, ]{$-1$} (R\j);

\draw[-latex] (X\j) to[bend left = 30] node[lbl, pos = .3]{$-1$} (G\j);

\draw[-latex] let \n1 = {int(mod(\j+2,3))} in (Z\j) to[bend right = 30] node[lbl, pos = .3]{$-1$} (B\n1);

}
\node[right =  of Y2] {$\mathbb{Z}_{3}$};
\end{tikzpicture}
\hfill
\begin{tikzpicture}[vtx/.style={draw, circle, inner sep = 1 pt, font = \tiny}, lin/.style = {draw, inner sep = 2.5 pt, font = \tiny, scale = .8}, lbl/.style={midway, fill = white, 
inner sep = 1.5 pt, font = \tiny}]
\def\r{2.5}
\def\rr{1}
\def\a{20}
\def\s{4}
\def\ss{2}
\foreach \j in {0,1,2}{  
\node[vtx, red]  (R\j)  at ($(90+\j*120 - \a:\r)$) {$R_{ \j i}$};
\node[lin, cyan] (Z\j) at (90+\j*120+60:\rr) {$Z_{ \j i}$};
\node[vtx, green!60!black] (G\j) at  (90+\j*120 + \a:\r) {$G_{ \j i}$};
\node[lin, , orange] (Y\j) at  (90+\j*120 - \a/2:\s) {$Y_{ \j i}$};
\node[vtx, blue] (B\j) at  (90+\j*120:\ss) {$B_{ \j i}$};
\node[lin, ] (X\j) at  (90+\j*120 + \a/2:\s) {$X_{ \j i}$};
}

\foreach \j in {0,1,2}{
\draw[thick] (X\j) -- (R\j);
\draw[thick] (X\j)--(B\j);
\draw[thick] (Y\j) -- (B\j);
\draw[thick] (Y\j) -- (G\j);
\draw[thick] (Z\j) -- (G\j);

\draw[-latex] let \n1 = {int(mod(\j+1,3))} in (Z\j) --  (R\n1);
\draw[-latex] (Y\j) to[bend right = 0] node[lbl, pos = .5, ]{$-1$} (R\j);

\draw[-latex] (X\j) to[bend left = 0] node[lbl, pos = .5]{$-1$} (G\j);

\draw[-latex] let \n1 = {int(mod(\j+2,3))} in (Z\j) to[bend right = 0] node[lbl, pos = .7]{$-1$} (B\n1);

}
\node[right =  of Y2] {$\mathbb{Z}_{3}$};
\end{tikzpicture}

\caption{Expanding the first copy of $\mathbb{Z}_{3}$ (that is, having symmetry classes 
$\Box_{0j}$, $\Box_{1j}$, $\Box_{2j}$, $j \in \mathbb{Z}_{3}$) gives a quotient isomorphic to the graph $GG$. 
Again, two drawings are shown, one corresponding directly to the expansion and one obviously isomorphic 
to the drawing of $GG$ shown in Figure \ref{fig:GG}.}
\label{fig:GG-RLG}
\end{center}
\end{figure}

%%%%%%%%%%%%%%%%%%%%%%%%%%%%%%%%%%%%%%%%%%%%%%%%

\section{A polycyclic realization of the Gray Configuration with the Pappus RLG}  \label{sect: Gray_with_Pappus}

%%%%%%%%%%%%%%%%%%%%%%%%%%%%%%%%%%%%%%%%%%%%%%%%

In this section, we show that the $\mathbb{Z}_{3}$ quotient graph, which as an unlabeled graph, 
is isomorphic to the Pappus graph (Figure \ref{fig:Pappus}),  produces a polycyclic realization of 
the Gray graph with $\mathbb{Z}_{3}$ symmetry. Consider the re-drawing of the Pappus voltage 
graph shown in Figure~\ref{fig:PapOnHC}, in which a particular Hamiltonian cycle is chosen to 
be on the boundary of the graph.

\begin{figure}[!h]
\begin{center}
\begin{tikzpicture}[vtx/.style={draw, circle, inner sep = 1 pt, font = \tiny}, lin/.style = {draw, inner sep = 2.5 pt, font = \tiny}, lbl/.style={midway, fill = white, inner sep = 1.5 pt, font = \tiny}, scale = .65]
\def\r{4.5}
\def\s{1}

\node[vtx, red] (R0) at (360/18*0:\r){$R_{j0}$};
\node[lin] (X0) at (360/18*1:\r){$X_{j0}$};
\node[vtx, green!60!black] (G2) at (360/18*2:\r){$G_{j2}$};
\node[lin, cyan] (Z2) at (360/18*3:\r){$Z_{j2}$};
\node[vtx, blue] (B1) at (360/18*4:\r){$B_{j1}$};
\node[lin] (X1) at (360/18*5:\r){$X_{j1}$};
\node[vtx, green!60!black] (G0) at (360/18*6:\r){$G_{j0}$};
\node[lin, cyan] (Z0) at (360/18*7:\r){$Z_{j0}$};
\node[vtx, blue] (B2) at (360/18*8:\r){$B_{j2}$};
\node[lin, orange] (Y2) at (360/18*9:\r){$Y_{j2}$};
\node[vtx, red] (R1) at (360/18*10:\r){$R_{j1}$};
\node[lin, cyan] (Z1) at (360/18*11:\r){$Z_{j1}$};
\node[vtx, blue] (B0) at (360/18*12:\r){$B_{j0}$};
\node[lin, orange] (Y0) at (360/18*13:\r){$Y_{j0}$};
\node[vtx, red] (R2) at (360/18*14:\r){$R_{j2}$};
\node[lin, ] (X2) at (360/18*15:\r){$X_{j2}$};
\node[vtx, green!60!black] (G1) at (360/18*16:\r){$G_{j1}$};
\node[lin, orange] (Y1) at (360/18*17:\r){$Y_{j1}$};

\path (Z2.north west) node[above right, font=\tiny] {$\stackrel{\text{start}}{\downarrow}$};

%Z0, G0, Y0, B0, X0, R0, 
%Y1, B1, X1,R1,Z1, G1,
%X2

\foreach \i in {0,1,2}{
\draw[thick] (X\i) -- (R\i);
\draw[thick] (X\i)--(B\i);
\draw[thick] (Y\i) -- (B\i);
\draw[thick] (Y\i) -- (G\i);
\draw[thick] (Z\i) -- (G\i);
\draw[-latex]  (Z\i) -- node[lbl]{$1$} (R\i);
\draw[-latex] let \n1 = {int(mod(\i+2,3))} in (Y\i) to[bend left = 0] (R\n1);
\draw[-latex] let \n1 = {int(mod(\i+2,3))} in (X\i) to[bend left = 0]  (G\n1);
\draw[-latex]let \n1 = {int(mod(\i+2,3))} in (Z\i) to[bend left = 0] node[lbl, , rotate = 0]{$-1$} (B\n1);
}
\node[right =  of X2] {$\mathbb{Z}_{3}$};
\end{tikzpicture}
\hfill
 \begin{tikzpicture}[vtx/.style={draw, circle, inner sep = 1 pt, font = \tiny}, lin/.style = {draw, inner sep = 2.5 pt, font = \tiny}, lbl/.style={midway, fill = white, inner sep = 1.5 pt, font = \tiny}, scale = .65]
\def\r{4.5}
\def\s{1}

\node[vtx, red] (R0) at (360/18*0:\r){$R_{j0}$};
\node[lin] (X0) at (360/18*1:\r){$X_{j0}$};
\node[vtx, green!60!black] (G2) at (360/18*2:\r){$G_{j2}$};
\node[lin, cyan] (Z2) at (360/18*3:\r){$Z_{j2}$};
\node[vtx, blue] (B1) at (360/18*4:\r){$B_{j1}$};
\node[lin] (X1) at (360/18*5:\r){$X_{j1}$};
\node[vtx, green!60!black] (G0) at (360/18*6:\r){$G_{j0}$};
\node[lin, cyan] (Z0) at (360/18*7:\r){$Z_{j0}$};
\node[vtx, blue] (B2) at (360/18*8:\r){$B_{j2}$};
\node[lin, orange] (Y2) at (360/18*9:\r){$Y_{j2}$};
\node[vtx, red] (R1) at (360/18*10:\r){$R_{j1}$};
\node[lin, cyan] (Z1) at (360/18*11:\r){$Z_{j1}$};
\node[vtx, blue] (B0) at (360/18*12:\r){$B_{j0}$};
\node[lin, orange] (Y0) at (360/18*13:\r){$Y_{j0}$};
\node[vtx, red] (R2) at (360/18*14:\r){$R_{j2}$};
\node[lin, ] (X2) at (360/18*15:\r){$X_{j2}$};
\node[vtx, green!60!black] (G1) at (360/18*16:\r){$G_{j1}$};
\node[lin, orange] (Y1) at (360/18*17:\r){$Y_{j1}$};

\path (B2.west) node[left, font=\tiny] {start $\to$};

%Z0, G0, Y0, B0, X0, R0, 
%Y1, B1, X1,R1,Z1, G1,
%X2

\foreach \f/\g in {X0/R0, Y1/R0, Y1/G1, X2/R2, R2/Y0,Y0/B0, Z1/B0,Z1/R1, Y2/R1, Y2/B2, Z0/G0, X1/G0, X1/R1, X0/B0,X1/B1,Z2/B1,Z2/G2,X0/G2,X2/G1,Z0/R0}{\draw[ thick] (\f) -- (\g);}

\foreach \f/\g/\h in {Z0/B2/{-1},Y2/G2/{-1}, Y1/B1/1, X2/B2/1,Z2/R2/1, Y0/G0/1,Z1/G1/1}{\draw[,-latex ] (\f) -- node[lbl]{$\h$} (\g);}

\begin{scope}[on background layer]
\foreach \f/\g in {B2/Y2, Y2/R1, R1/Z1, Z1/B0, B0/Y0, Y0/R2,G1/Y1, Y1/R0}{\draw[yellow, line width = 1.5mm, opacity = .5] (\f) -- (\g);}
\foreach \f/\g in {X2/R2,X2/B2, X2/G1, Z1/G1,X0/R0,X0/B0,G2/X0,G2/Y2, Z2/G2,Z2/R2,B1/Z2, B1/Y1, X1/B1, X1/R1, G0/X1, G0/Y0, Z0/G0,Z0/R0}{\draw[green, line width = 1mm, opacity = .5] (\f) -- (\g);}
\foreach \f/\g in {Z0/B2}{\draw[cyan, line width =2mm, opacity = .5] (\f) -- (\g);}
\end{scope}

\node[right =  of X2] {$\mathbb{Z}_{3}$};
\end{tikzpicture}

\caption{(Left) A drawing of the Pappus RLG using a Hamiltonian cycle on the boundary, used in 
producing a polycyclic realization of the Gray configuration whose reduced Levi graph is the 
Pappus RLG. (Right) Relabeling to zero out the boundary edges. In the construction of a 
corresponding polycyclic realization, yellow highlights correspond to incidences with one degree of 
freedom, green highlights to incidences that are determined, and the blue highlights to a final 
incidence that results from a continuity argument. }
\label{fig:PapOnHC}
\end{center}
\end{figure}
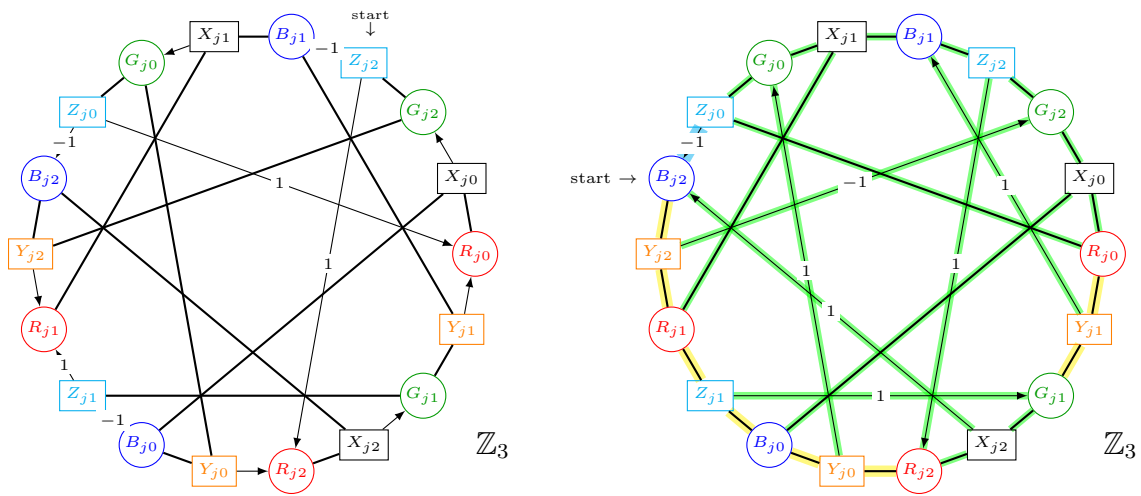

Two useful facts about voltage graphs (which reduced Levi graphs are), are the following: (1) 
Adding an element of the voltage group to all edges incident with a node in the voltage graph 
results in an isomorphic lift graph (roughly, it corresponds to changing the designation of the ``0-th'' 
element of a particular symmetry class), and (2) consequently, given any spanning tree in a voltage 
graph, it is possible to zero-out the labels on that spanning tree. See \cite{PisSer2013} for more 
detailed information, and \cite{Ber2013}, especially Figure 6, for a worked out example.
Beginning with the Pappus graph drawn with a Hamiltonian cycle on the boundary shown in Figure 
\ref{fig:PapOnHC}, we add and subtract voltages as necessary around the perimeter to zero-out all 
but one of the voltages on the outside Hamiltonian cycle, which will make construction of the 
corresponding realization of the Gray configuration more tractable.  (Specifically, we started, 
somewhat arbitrarily at the node $Z_{j2}$ and added +1 to each of the labels on the incident edges, 
which turned  $Z_{j2} \xrightarrow{-1} B_{j1}$ into $Z_{j2} \to B_{j1}$ while also turning  $Z_{j2} 
\xrightarrow{1} R_{j2}$ to $Z_{j2} \xrightarrow{2} R_{j2}$ and $Z_{j2} \to G_{j2}$ to $Z_{j2} 
\xrightarrow{1} G_{j2}$. Next, we added $-1$ to all the edges incident with $G_{j2}$, which zeroes 
out the previously assigned $Z_{j2} \xrightarrow{1} G_{j2}$ while modifying the other two edges, 
including the one which becomes $G_{j2} \xrightarrow{-1} X_{j0}$, which is zeroed out by adding 
+1 to $X_{j0}$, and so on around the boundary, until all but one of the edges in the boundary cycle 
has label 0.)

%%%%%%%%%%%%

To construct a configuration with $\mathbb{Z}_{3}$ symmetry with this reduced Levi graph, we follow the same sort of construction techniques that were outlined 
in~\cite{Ber2013}, beginning at point symmetry class $B_{j2}$ and proceeding counterclockwise around the boundary of the RLG. At each step except the last, we 
are doing one of the following:

\begin{itemize}
\item Initialization: construct the point class $B_{j2}$ as the vertices of a regular 3-gon centered at $\mc{O}$: specifically, let $B_{j2} = (\cos(2 \pi j/3), \sin(2 \pi j/3))$.
\item Draw a line (class) arbitrarily through a point (class) (one degree of freedom, denoted by a yellow highlight on the corresponding edge);
\item Place a point (class) arbitrarily on a previously drawn line class (one degree of freedom, denoted by a yellow highlight on the corresponding edge);
\item Construct a line (class) as the join of two points (green highlight on the corresponding graph edges)
\item Construct a point (class) as the meet of two lines (green highlight on the corresponding graph edges)
\end{itemize}
Each of these steps depends on at most two previously-constructed elements. 

In the final step, according to the constructions in the RLG, we need to have the line $Z_{j0}$ be incident with point $G_{j0}$, $R_{j0}$ and $B_{(j-1)2}$ 
(cyan highlight on the corresponding graph edges, which we accomplish via a continuity argument.

\newcommand{\meet}[2]{\textbf{meet}(#1, #2)}
\newcommand{\join}[2]{\textbf{join}(#1, #2)}

\newpage

The specific construction steps are as follows:
\begin{enumerate}
\item Construct $B_{j2} = (\cos(2 \pi j/3), \sin(2 \pi j/3))$.
\item Construct line $Y_{02}$ arbitrarily through $B_{02}$, and construct $Y_{j2}$ by rotating $Y_{02}$ by  $2 \pi j/3$ about $(0,0)$ (henceforth, ``by rotation'').
\item Construct $R_{01}$ arbitrarily on $Y_{02}$ and the rest of the $R_{j1}$ by rotation.
\item Construct $Z_{01}$ arbitrarily through $R_{01}$, and the rest of the $Z_{j1}$ by rotation.
\item Construct $B_{00}$ arbitrarily on $Z_{01}$ and the rest of the $B_{j0}$ by rotation.
\item Construct $Y_{00}$ arbitrarily through $B_{00}$, and the rest of the $Y_{j0}$ by rotation.
\item Construct $R_{02}$ arbitrarily on $Y_{00}$ and the rest of the $R_{j2}$ by rotation.
\item Construct $X_{02} = \join{R_{02}}{B_{12}}$ (corresponding to the label $X_{j2}\xrightarrow{1} B_{j2})$ and the rest of the $X_{j2}$ by rotation.
\item Construct $G_{01} = \meet{X_{02}}{Z_{21}}$ and the rest of the $G_{j2}$ by rotation. Note that the arrow $Z_{j1}\xrightarrow{1} G_{j1}$ says that for each $j$, $Z_{j1} 
\sim G_{(j+1)1}$, or alternately $Z_{(j-1)1} \sim G_{j1}$, and $-1 \equiv 2 \bmod 3$. 
\item Construct $Y_{01}$ arbitrarily through $G_{01}$, and the rest of the $Y_{j1}$ by rotation. 
\item Construct $R_{00}$ arbitrarily on $Y_{01}$ and the rest of the $R_{j0}$ by rotation.
\item Construct $X_{00} = \join{R_{00}}{B_{00}}$ and the rest of the $X_{j0}$ by rotation.
\item Construct $G_{02} = \meet{X_{00}}{Y_{12}}$ (corresponding to the label $Y_{j2} \xrightarrow{-1} G_{j2}$) and the rest of the $G_{j2}$ by rotation.
\item Construct $Z_{02} = \join{G_{02}}{R_{12}}$ (corresponding to the label $Z_{j2} \xrightarrow{1} R_{j2}$) and the rest of the $Z_{j2}$ by rotation.
\item Construct $B_{01} = \meet{Z_{02}}{Y_{21}}$ (corresponding to the label $Y_{j1} \xrightarrow{-1} B_{j1}$) and the rest of the $B_{j1}$ by rotation.
\item Construct $X_{01} = \join{B_{01}}{R_{01}}$ and the rest of the $X_{j1}$ by rotation. 
\item Construct $G_{00} = \meet{X_{01}}{Y_{20}}$ (corresponding to the label $Y_{j0} \xrightarrow{1} G_{j0}$) and the rest of the $G_{j0}$ by rotation.
\item Finally, construct $Z_{00}=\join{G_{00}}{R_{00}}$ and the rest of the $Z_{j0}$ by rotation. 
\end{enumerate}

The missing incidence, indicated in cyan, is that line $Z_{00}$ needs to pass through $B_{22}$ (and by symmetry, $Z_{10} \sim B_{02}$, $Z_{20} \sim B_{12}$). 
This can be accomplished by a continuity argument, observing that, for example, moving the last point class $R_{j0}$ that has a degree of freedom sweeps 
the resulting line $Z_{00}$ across $B_{22}$ (and corresponding for the other two lines in the class). This is illustrated in the three snapshot constructions 
shown in Figure \ref{fig:PappusCfgRealization}.

\begin{figure}[htbp]
\begin{center}

\begin{subfigure}{.45\linewidth}
\includegraphics[width=\linewidth]{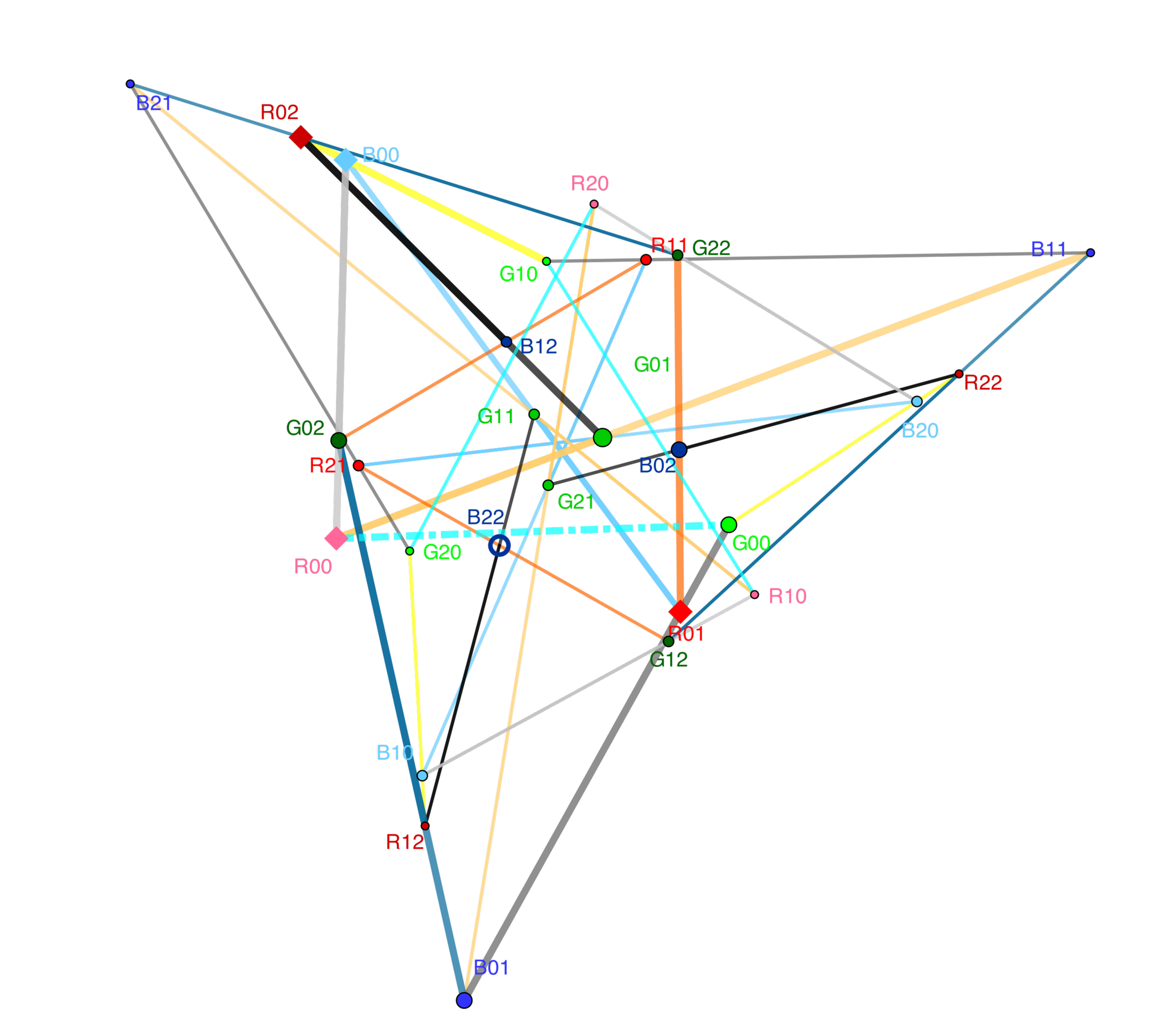}
%[Figures turned off because they are too big]
\caption{Line $Z_{00}$ (dashed) is just above point $B_{22}$ (hollow)}
\end{subfigure}
\hfill
\begin{subfigure}{.45\linewidth}
\includegraphics[width=\linewidth]{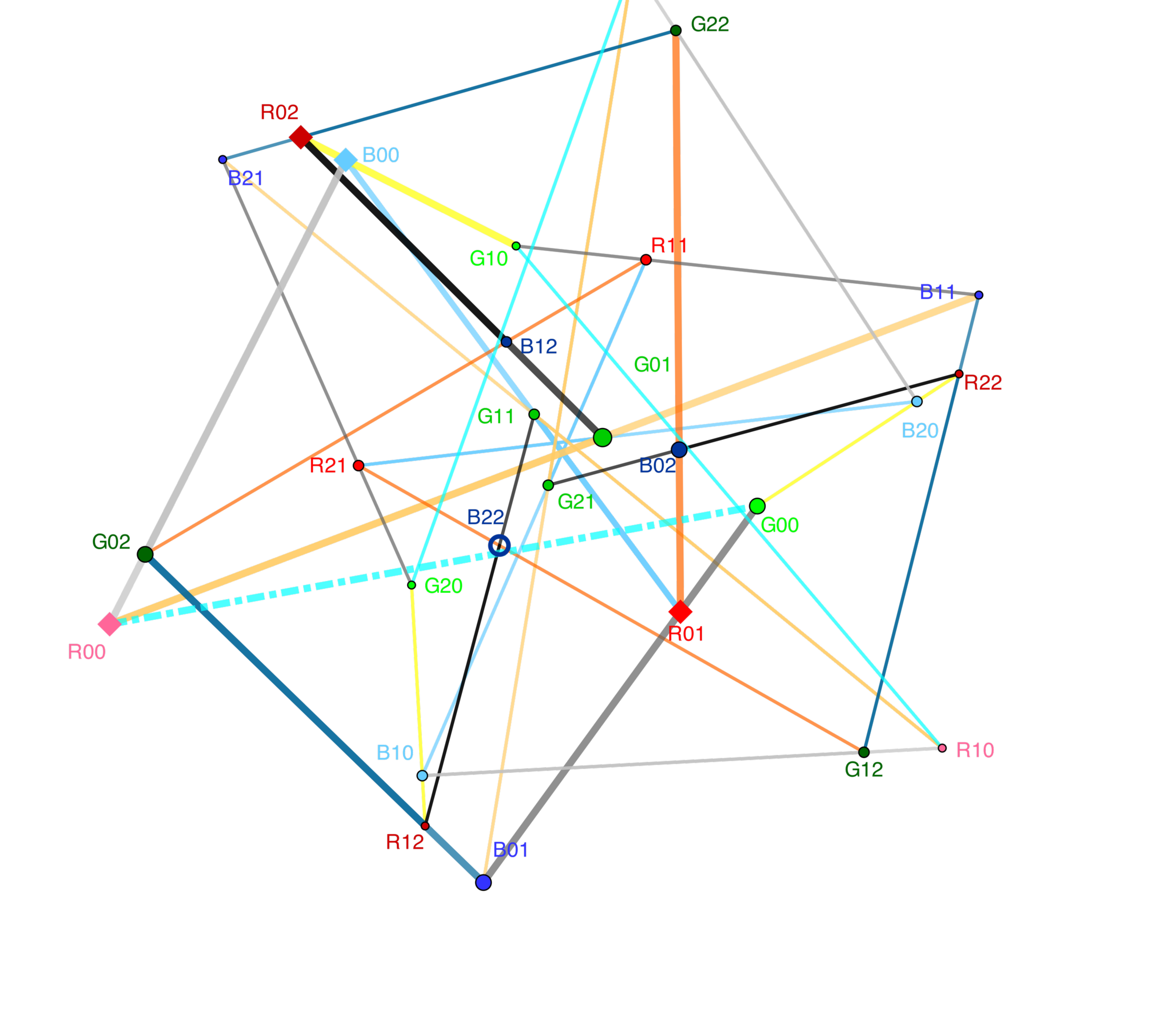}
%[Figures turned off because they are too big]
\caption{Line $Z_{00}$ (dashed) is just below point $B_{22}$ (hollow)}
\end{subfigure}
\begin{subfigure}{.75\linewidth}
\includegraphics[width=\linewidth]{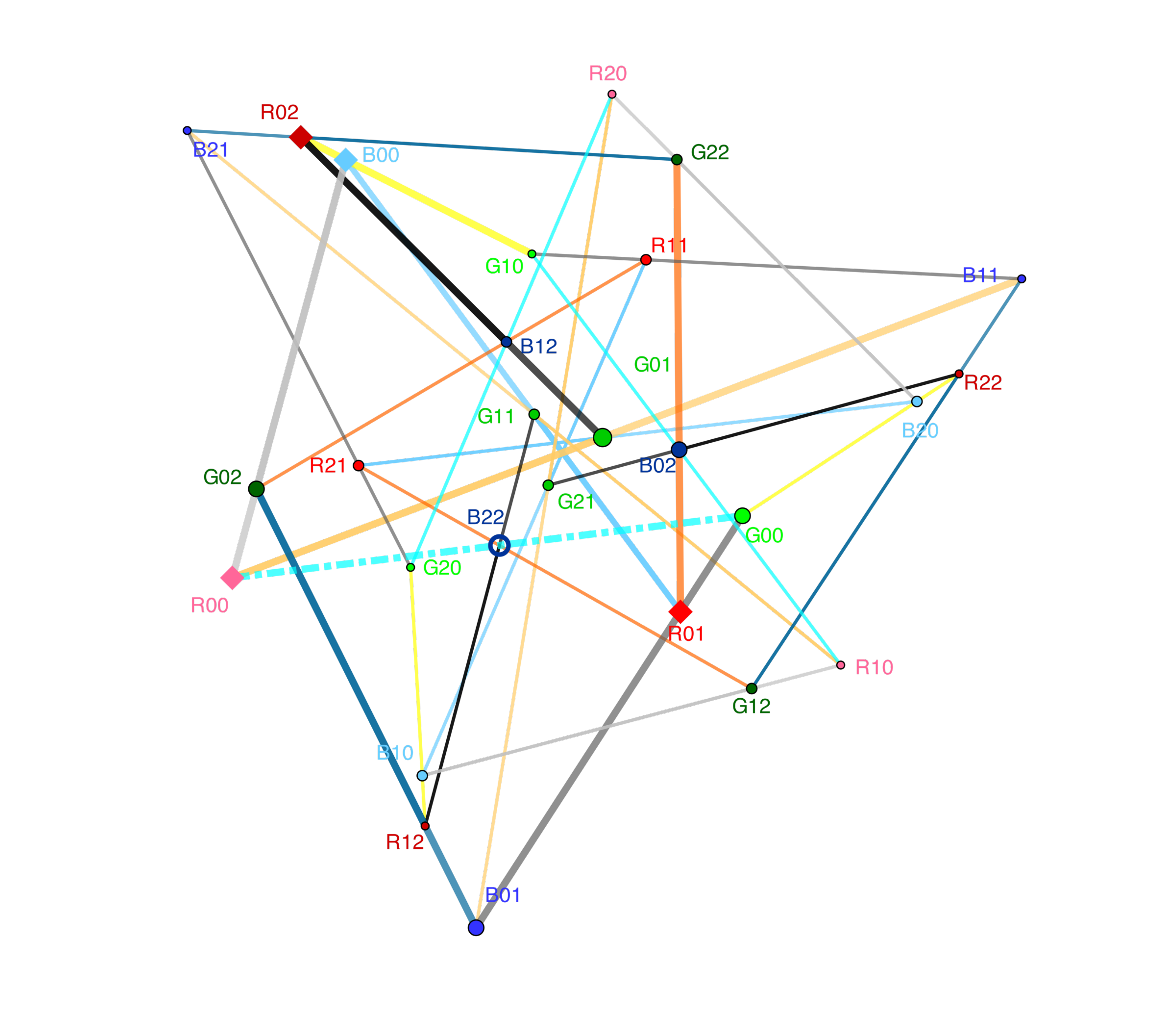}
%[Figures turned off because they are too big]
\caption{The exact configuration, by continuity. }
\end{subfigure}

\caption{Realizing the Gray configuration with the Pappus RLG. The 0th element of each point and line class is shown larger/thicker. Points $R$, $G$, $B$ are shown
in shades of red, green, blue respectively (going from light to dark as $j = 0,1,2$) and similarly for line classes $X, Y, Z$ using shades of black, yellow, cyan. Points 
shown  with diamonds are movable. (The movable lines are not specifically indicated.)  Moving $R_{00}$ along its line moves the cyan dashed line $Z_{00}$ from 
above the point $B_{22}$ (shown hollow) to below the point $B_{22}$, so by continuity there is a position of $R_{22}$ in which line $Z_{00}$ passes through $B_{22}$ 
exactly. The $\mathbb{Z}_{3}$ action is counterclockwise rotation through $2 \pi/3$ and corresponds to adding $+111$ to each point label.}
\label{fig:PappusCfgRealization}
\end{center}
\end{figure}

%%%%%%%%%%%%%%%%%%%%%%%%%%%%%%%%%%%%%%%%%%%%%%%%%

\section{A polycyclic realization of the Gray configuration with threefold rotational symmetry,  using the $GG$ RLG.}

%%%%%%%%%%%%%%%%%%%%%%%%%%%%%%%%%%%%%%%%%%%%%%%%%

A polycyclic realization of the Gray configuration with threefold rotational symmetry is depicted in 
Figure~\ref{fig:GrayPoly}. The reduced Levi graph using $\mathbb{Z}_{3}$ as the voltage group is $GG$.

In what follows we explain how this realization is constructed.
\begin{figure}[!h]
\begin{center}
\includegraphics[width=0.875\textwidth]{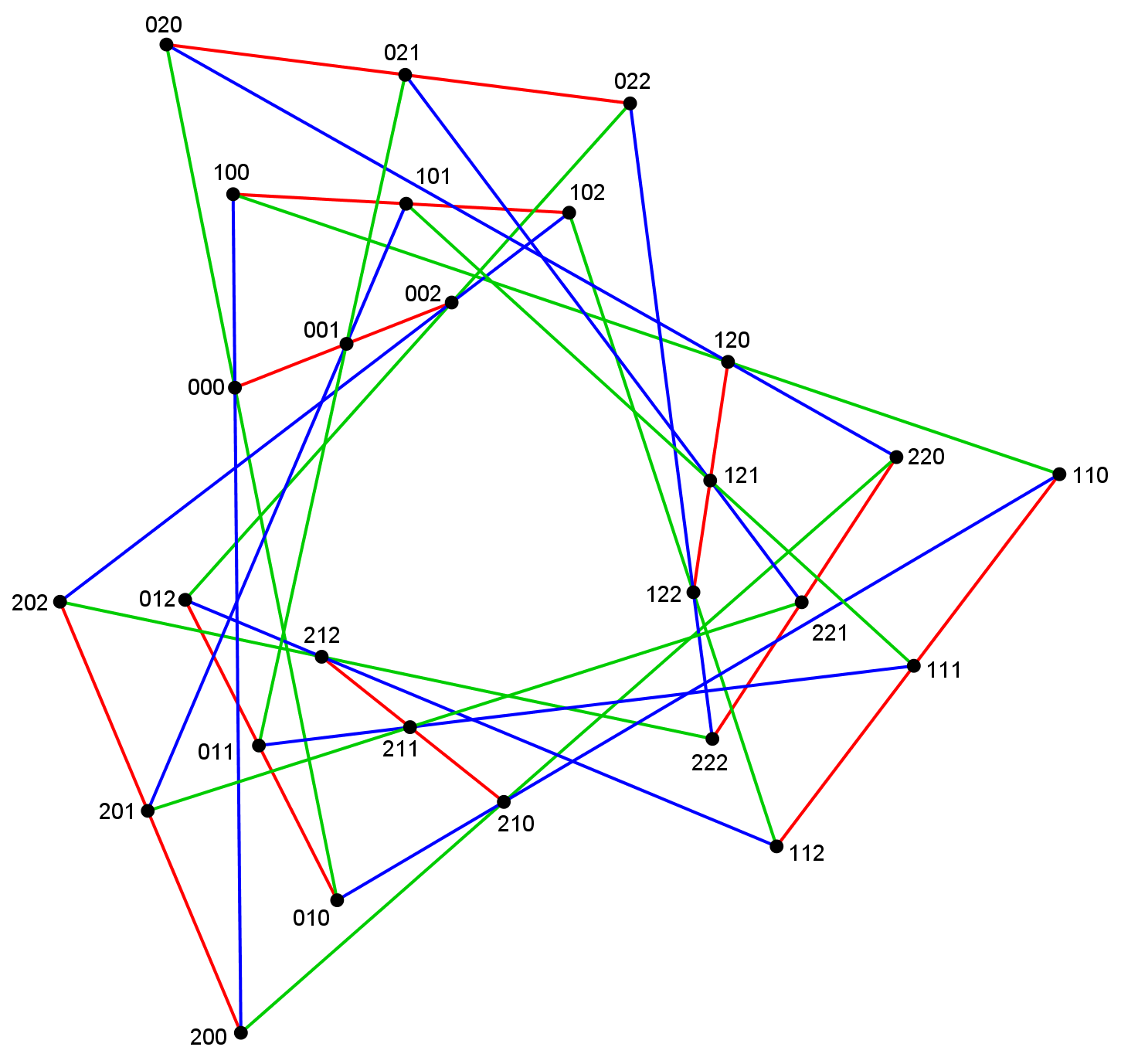}
\caption{The Gray configuration: a polycyclic realization with 3-fold rotational symmetry.
The labels of points are those introduced in Section 1. The colors em\-pha\-size resolvability of the configuration (show the parallel classes). 
Note that the $\mathbb{Z}_{3}$ action corresponds to adding $+210$ to each point label.}
\label{fig:GrayPoly}
\end{center}
\end{figure}

We start from a polycyclic realization of the Pappus configuration, see Figure~\ref{fig:PappusPoly}. This realization is well known; it occurs
e.g.\ in~\cite[Figure 1.16]{Gru2000} and in~\cite[Figure 1.10]{PisSer2013}. The Pappus configuration contains as a subconfiguration the 
$(9_2, 6_3)$ ``grid'' configuration, which is shown in Figure~\ref{fig:PappusPoly} by blue and green lines. The labels of the points in that 
figure verify that this is so, indeed (note that the third coordinate in these labels show that this configuration can be conceived as lying 
in the $XY$ plane of a spatial Cartesian coordinate system). This implies that using two additional suitable copies of the $(9_2, 6_3)$
configuration (along with adding 9 independent lines), one obtains a realization of the Gray configuration given in Figure~\ref{fig:GrayPoly}.
\begin{figure}[!h]
\begin{center}
\includegraphics[width=0.5\textwidth]{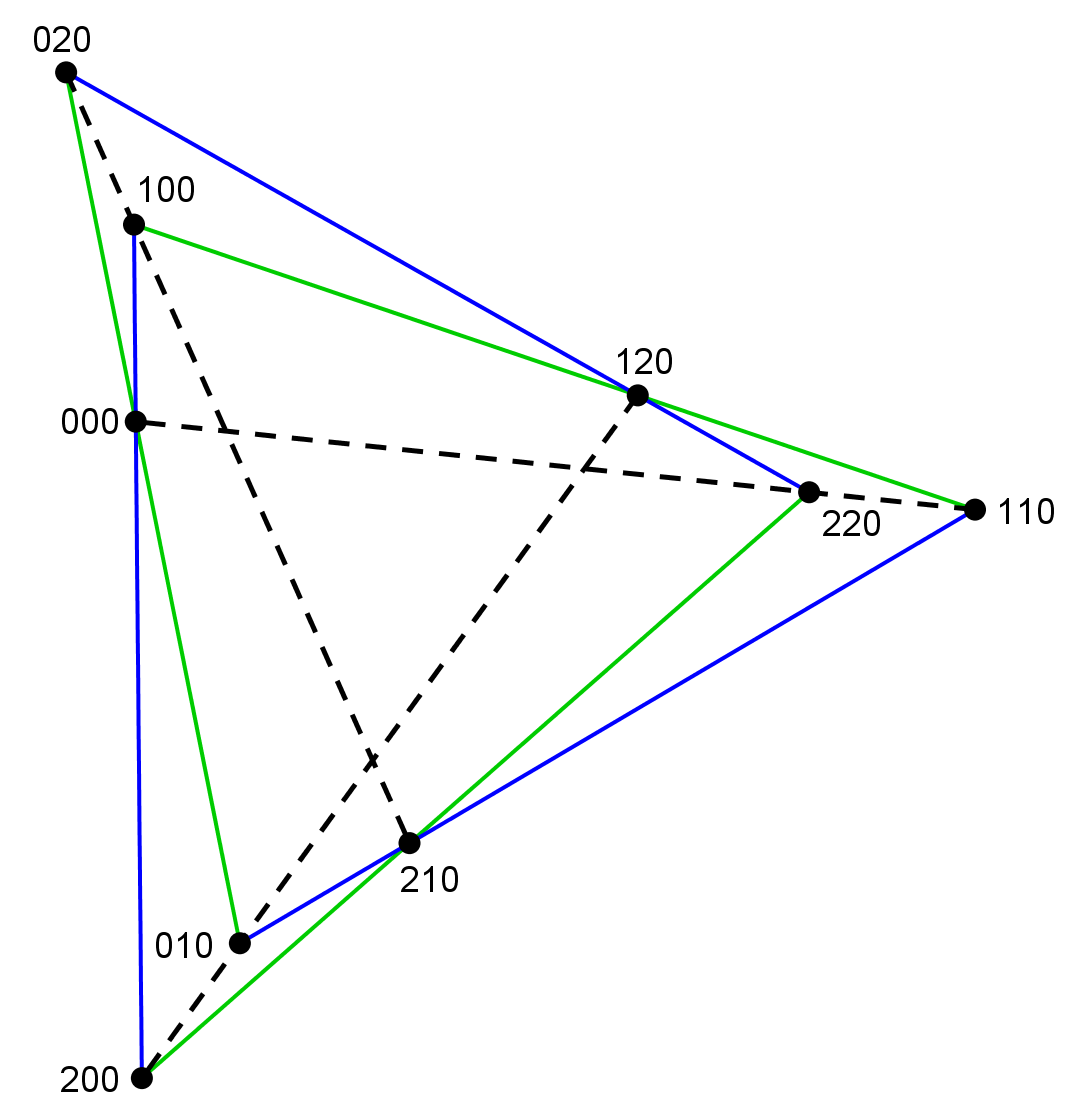}
\caption{Polycyclic realization of the Pappus configuration with 3-fold rotational symmetry.
Removal of the dashed lines gives rise to a polycyclic realization of the $(9_2, 6_3)$ ``grid'' configuration.}
\label{fig:PappusPoly}
\end{center}
\end{figure}

To this end we need the following Theorem~\cite{Mol1990, Yag1968}.

\begin{theorem} \label{thm:Yaglom}
Assume that a geometric figure $\mathcal F$ changes continuously in such a way that
\begin{enumerate}[\rm{(}1\rm{)}]
   \item precisely one of its points is fixed (denote it by $O$);
   \item it is at all times directly similar to its original copy.
\end{enumerate}
Consider two points $P, P'\in \mathcal F$, both different from $O$. Then, if $P$ moves along a path $\ell$, 
then $P'$ moves along a path $\ell'$ such that $\ell'$ is an image of $\ell$ under a dilative rotation. 
\end{theorem}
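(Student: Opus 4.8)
The plan is to work in the complex plane, placing the fixed point $O$ at the origin, and to exploit the fact that every orientation-preserving (direct) similarity fixing a point is exactly multiplication by a nonzero complex number. This reduces the whole statement to a one-line algebraic elimination.

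First I would identify the Euclidean plane with $\mathbb{C}$ so that $O$ corresponds to $0$. For each time $t$ in the motion, hypothesis (2) says the current figure $\mathcal F(t)$ is directly similar to the original $\mathcal F(0)$, so there is an orientation-preserving similarity $\sigma_t$ with $\sigma_t(\mathcal F(0)) = \mathcal F(t)$; such a map has the form $z \mapsto a(t)\,z + b(t)$ with $a(t) \neq 0$. Since $\mathcal F$ contains the two distinct points $O$ and $P$, the map $\sigma_t$ is uniquely determined by its action on them, and in particular it sends $P(0) \mapsto P(t)$ and $P'(0) \mapsto P'(t)$ via the \emph{same} transformation. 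The key structural observation is that, because $O$ is held fixed throughout the motion, each $\sigma_t$ must fix the origin, forcing $b(t) = 0$. Hence $\sigma_t$ is the pure dilative rotation $z \mapsto \lambda(t)\,z$ about $O$, where $\lambda(t) = a(t) \in \mathbb{C}^{*}$.

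Next I would track the two points explicitly. Writing $p_0, p'_0 \in \mathbb{C}$ for the initial positions of $P, P'$ (both nonzero, since neither equals $O$), we obtain $P(t) = \lambda(t)\,p_0$ and $P'(t) = \lambda(t)\,p'_0$ at every time $t$. Eliminating the time-dependent factor $\lambda(t)$ yields $P'(t) = (p'_0 / p_0)\,P(t)$, that is, $P'(t) = \mu\,P(t)$ with the constant $\mu := p'_0/p_0 \in \mathbb{C}^{*}$. Thus the entire path $\ell'$ traced by $P'$ is the image of the path $\ell$ traced by $P$ under the single fixed map $z \mapsto \mu z$, which is precisely a dilative rotation centered at $O$ (rotation by $\arg\mu$ together with scaling by $|\mu|$). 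This is exactly the asserted conclusion, and continuity of $\ell'$ follows from continuity of $\ell$ since $\mu$ is constant.

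The main obstacle is the justification that each $\sigma_t$ fixes $O$, i.e.\ that the translational part $b(t)$ vanishes; everything else is routine once this reduction to pure spiral similarities is in place. This is where hypothesis (1) is essential: a nontrivial direct similarity with $a(t) \neq 1$ has a unique fixed point $b(t)/(1 - a(t))$, and the hypothesis that $O$ is the point held fixed for all $t$ pins this down to the origin, giving $b(t) = 0$; the degenerate case $a(t) = 1$ (a nontrivial translation, which has \emph{no} fixed point) is likewise excluded by (1). I would also note the hypothesis $P, P' \neq O$ is needed precisely so that $p_0, p'_0 \neq 0$ and the ratio $\mu = p'_0/p_0$ is a well-defined element of $\mathbb{C}^{*}$, so that the dilative rotation is genuine and invertible.
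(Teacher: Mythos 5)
Your argument is correct. Note, however, that the paper does not prove this theorem at all: it is quoted from the literature (the citations are to Moln\'ar and to Yaglom's \emph{Geometric Transformations}), so there is no internal proof to compare against. Your complex-number route is the standard one and is sound: identifying the plane with $\mathbb{C}$, placing $O$ at the origin, observing that each instantaneous direct similarity $\sigma_t$ must fix $O$ and is therefore $z \mapsto \lambda(t)z$, and then eliminating $\lambda(t)$ to get $P'(t) = (p'_0/p_0)\,P(t)$ is exactly the right reduction; the constancy of $\mu = p'_0/p_0$ is the whole content of the theorem. The one point worth making explicit is the (implicit) assumption that the continuous deformation is realized by a single similarity $\sigma_t$ carrying every point of $\mathcal F(0)$ to its position at time $t$ --- this is the intended reading of ``changes continuously'' together with condition (2), and without it the trajectories of $P$ and $P'$ would not be coupled. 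You might also remark that when $P = P'$ the map $z \mapsto \mu z$ degenerates to the identity, and when $\mu$ is real or unimodular it degenerates to a pure dilation or pure rotation; these are harmless boundary cases of ``dilative rotation'' and do not affect the application in the paper, where $\ell$ is a straight line and hence so is $\ell' = \mu\ell$.
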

Note that ``directly similar'' means that while changing $\mathcal F$, its orientation is preserved; in this procedure, images of 
$\mathcal F$ occur under the action of a one-parameter family of dilative rotations (also known as \emph{spiral similarities}). 
For some properties of a \emph{dilative rotation}, see e.g.~\cite{CoxGre}.   

We apply this theorem in the following way. Take a copy of a polycyclic realization of the $(9_2, 6_3)$ ``grid'' configuration
(let it be denoted by $\mathcal G_0$). Denote its centre of rotation by $O$, and fix this point; it plays the role of the point 
$O$ of the theorem. Choose a straight line which passes through a configuration point of this grid, but avoids all its other 
configuration points as well as the centre $O$; this will play the role of the path $\ell$ of the theorem, thus we shall refer 
to it by the same notation. Considering our Figure~\ref{fig:GrayPoly}, the starting copy of the grid configuration can be 
taken as a copy of precisely what is depicted in Figure~\ref{fig:PappusPoly} (with the same labels of points). In addition, 
the line $\ell$ is taken as the red line through the point $(020)$.

Now take the copies $\mathcal G_1$ and $\mathcal G_2$ which are images of $\mathcal G_0$ under dilative rotations 
such that their points $(021)$ and  $(022)$, respectively correspond to the point $(020)$, and lie on the line $\ell$. As a 
consequence of Theorem~\ref{thm:Yaglom} above, we have that the points of the set $\mathcal G_0 \cup \mathcal G_1 
\cup \mathcal G_2$ are arranged into collinear triples along the 9 red lines of our Figure~\ref{fig:GrayPoly} (note that all 
these lines are copies of $\ell$ under dilative rotations, again due to the theorem). As a result, the set $\mathcal G_0 \cup 
\mathcal G_1 \cup \mathcal G_2$, together with the 9 new lines, forms a configuration which is isomorphic to the Gray 
configuration; moreover, it is polycyclic with threefold rotational symmetry.

\begin{figure}[htbp]
\begin{center}
\includegraphics[width = .66\linewidth]{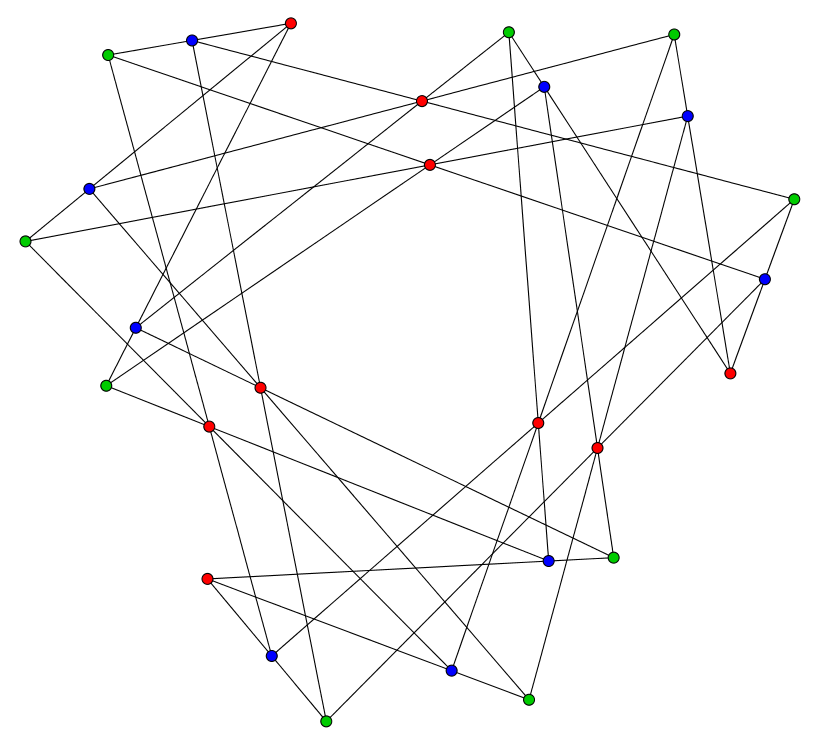}
\caption{Using reciprocation on the configuration shown in Figure \ref{fig:GrayPoly}, we also construct a realization of the dual configuration to the Gray configuration.}
\label{default}
\end{center}
\end{figure}

Clearly, the realization of the Gray configuration shown in Figure \ref{fig:GrayPoly} has 3-fold rotational symmetry, and it is easy to verify that the 
rotation corresponds to adding +210 to each of the point labels. We previously showed that adding +210 corresponds to the reduced Levi graph 
shown in Figure ~\ref{fig:GG-RLG}, so this realization is a polycylic realization with graph $GG$ as its reduced Levi graph.

This geometric realization has been used to construct a unit-distance realization of the Gray graph; see \cite{BerGevPis2025}.

%%%%%%%%%%%%%%%%%%%%%%%%%%%%%%%%%%%%

\section{9-fold symmetry of the Gray Graph and Gray Configuration} \label{sect:9fold}

%%%%%%%%%%%%%%%%%%%%%%%%%%%%%%%%%%%%

\subsection{9-fold symmetry of the Gray graph}

Figure~\ref{fig:GrayZ9} shows two drawings of the Gray Graph with 9-fold rotational symmetry, which 
interact nicely with the Pappus realization. (The graph on the left has the positions of the rings of 
points and lines chosen so that the graph is intelligible, while the graph on the right has the $GG$ 
symmetry class elements lined up; there is no change in the order of the elements along each 
rotational ring, just in the position of the 0th element of each ring of points and of lines.) Specifically, 
3-fold rotation preserves the symmetry classes under the Pappus action. For example, considering 
the class $B_{i0} = \{B_{00} = 100, B_{10} = 211, B_{20} = 022\}$ shown in blue, 
located on the outermost ring of the graph,  rotation by $120^{\circ}$ maps $B_{00} \to B_{10} 
\to B_{20}$. It is easy to verify that all of the Pappus symmetry classes (the color-coded columns in 
Table \ref{tab:SymmetryClassLabels}) are preserved by this 3-fold rotation.

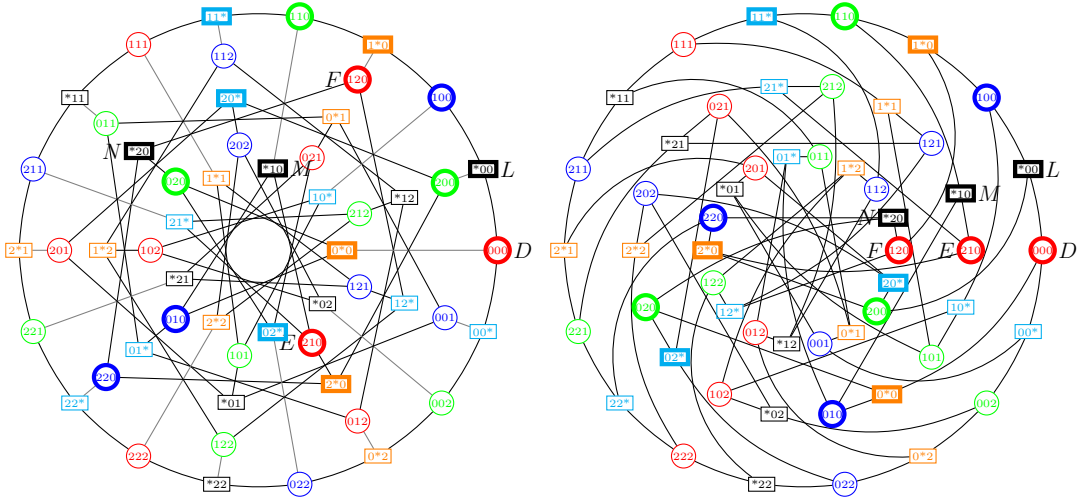
\begin{figure}[!h]
\begin{center}

\begin{tikzpicture}[vtx/.style = {draw, circle, inner sep = .5 pt, font = \tiny}, lin/.style = { draw, inner sep = 1.5 pt, font = \tiny}, scale = .7, every node/.style={transform shape}]

\pgfmathsetmacro{\r}{4.5}
\pgfmathsetmacro{\rr}{\r/10}
\pgfmathsetmacro{\er}{5.5}
\pgfmathsetmacro{\fr}{1.7}

\pgfmathsetmacro{\lr}{0}
\pgfmathsetmacro{\mr}{6.5}
\pgfmathsetmacro{\nr}{3.5}

\pgfmathsetmacro{\loff}{180/9}
\pgfmathsetmacro{\eoff}{-3*180/9}
\pgfmathsetmacro{\moff}{4*180/9}
\pgfmathsetmacro{\foff}{3*180/9}
\pgfmathsetmacro{\noff}{4*180/9+3*180/9}

\node[vtx,red, ultra thick] (D0) at (360*0/9: \r){000};%R00
\node[vtx,blue, ultra thick] (D1) at (360*1/9: \r){100};%B00
\node[vtx,green, ultra thick] (D2) at (360*2/9: \r){110};%G00
\node[vtx,red] (D3) at (360*3/9: \r){111};
\node[vtx,blue] (D4) at (360*4/9: \r){211};
\node[vtx,green] (D5) at (360*5/9: \r){221};
\node[vtx,red] (D6) at (360*6/9: \r){222};
\node[vtx,blue] (D7) at (360*7/9: \r){022};
\node[vtx,green] (D8) at (360*8/9: \r){002};
\node[right  = -1mm of D0] {$D$};

\node[lin,black, ultra thick] (L0) at (360*0/9+\loff: \r-\lr*\rr){*00};%X00
\node[lin,orange, ultra thick] (L1) at (360*1/9+\loff: \r-\lr*\rr){1*0};%Y00
\node[lin,cyan, ultra thick] (L2) at (360*2/9+\loff: \r-\lr*\rr){11*};%Z00
\node[lin,black] (L3) at (360*3/9+\loff: \r-\lr*\rr){*11};
\node[lin,orange] (L4) at (360*4/9+\loff: \r-\lr*\rr){2*1};
\node[lin,cyan] (L5) at (360*5/9+\loff: \r-\lr*\rr){22*};
\node[lin,black] (L6) at (360*6/9+\loff: \r-\lr*\rr){*22};
\node[lin,orange] (L7) at (360*7/9+\loff: \r-\lr*\rr){0*2};
\node[lin,cyan] (L8) at (360*8/9+\loff: \r-\lr*\rr){00*};
\node[right  = -1mm of L0] {$L$};

\node[vtx,red, ultra thick] (E0) at (360*0/9+\eoff: \r-\er*\rr){210};%R01
\node[vtx,blue] (E1) at (360*1/9+\eoff: \r-\er*\rr){121};
\node[vtx,green] (E2) at (360*2/9+\eoff: \r-\er*\rr){212};
\node[vtx,red] (E3) at (360*3/9+\eoff: \r-\er*\rr){021};
\node[vtx,blue] (E4) at (360*4/9+\eoff: \r-\er*\rr){202};
\node[vtx,green, ultra thick] (E5) at (360*5/9+\eoff: \r-\er*\rr){020};%G01
\node[vtx,red] (E6) at (360*6/9+\eoff: \r-\er*\rr){102};
\node[vtx,blue, ultra thick] (E7) at (360*7/9+\eoff: \r-\er*\rr){010};%B01
\node[vtx,green] (E8) at (360*8/9+\eoff: \r-\er*\rr){101};
\node[left  = -1mm of E0] {$E$};

\node[lin,black,  ultra thick] (M0) at (360*0/9+\moff: \r-\mr*\rr){*10};%X01
\node[lin,orange] (M1) at (360*1/9+\moff: \r-\mr*\rr){1*1};
\node[lin,cyan] (M2) at (360*2/9+\moff: \r-\mr*\rr){21*};
\node[lin,black] (M3) at (360*3/9+\moff: \r-\mr*\rr){*21};
\node[lin,orange] (M4) at (360*4/9+\moff: \r-\mr*\rr){2*2};
\node[lin,cyan, ultra thick] (M5) at (360*5/9+\moff: \r-\mr*\rr){02*};%Z01
\node[lin,black] (M6) at (360*6/9+\moff: \r-\mr*\rr){*02};
\node[lin, orange,ultra thick] (M7) at (360*7/9+\moff: \r-\mr*\rr){0*0};%Y01
\node[lin,cyan] (M8) at (360*8/9+\moff: \r-\mr*\rr){10*};
\node[right  = -1mm of M0] {$M$};

\node[vtx,red,,ultra thick] (F0) at (360*0/9+\foff: \r-\fr*\rr){120};%R02
\node[vtx,blue] (F1) at (360*1/9+\foff: \r-\fr*\rr){112};
\node[vtx,green] (F2) at (360*2/9+\foff: \r-\fr*\rr){011};
\node[vtx,red] (F3) at (360*3/9+\foff: \r-\fr*\rr){201};
\node[vtx,blue, ultra thick] (F4) at (360*4/9+\foff: \r-\fr*\rr){220};%B02
\node[vtx,green] (F5) at (360*5/9+\foff: \r-\fr*\rr){122};
\node[vtx,red] (F6) at (360*6/9+\foff: \r-\fr*\rr){012};
\node[vtx,blue] (F7) at (360*7/9+\foff: \r-\fr*\rr){001};
\node[vtx,green, ultra thick] (F8) at (360*8/9+\foff: \r-\fr*\rr){200};%G02
\node[left  = -1mm of F0] {$F$};

\node[lin,black, ,ultra thick] (N0) at (360*0/9+\noff: \r-\nr*\rr){*20};%X02
\node[lin, orange] (N1) at (360*1/9+\noff: \r-\nr*\rr){1*2};
\node[lin,cyan] (N2) at (360*2/9+\noff: \r-\nr*\rr){01*};
\node[lin,black] (N3) at (360*3/9+\noff: \r-\nr*\rr){*01};
\node[lin,orange, ultra thick] (N4) at (360*4/9+\noff:\r-\nr*\rr){2*0};%Y02
\node[lin,cyan] (N5) at (360*5/9+\noff: \r-\nr*\rr){12*};
\node[lin,black] (N6) at (360*6/9+\noff: \r-\nr*\rr){*12};
\node[lin,orange] (N7) at (360*7/9+\noff: \r-\nr*\rr){0*1};
\node[lin,cyan, ultra thick] (N8) at (360*8/9+\noff: \r-\nr*\rr){20*};%Z02
\node[left  = -1mm of N0] {$N$};

\begin{scope}[on background layer]
\foreach \i in {0,1, ..., 8}
{
\draw let \n1 = {int(mod(\i+0, 9))} in (L\i) to[bend left = 5] (D\n1);
\draw let \n1 = {int(mod(\i+1, 9))} in (L\i) to[bend left = -5] (D\n1);
\draw[gray] let \n1 = {int(mod(\i+8, 9))} in (L\i) to[bend left = 0] (F\n1);

\draw let \n1 = {int(mod(\i+0, 9))} in (M\i) to[bend left = 0] (E\n1);
\draw let \n1 = {int(mod(\i+7, 9))} in (M\i) to[bend left = 0] (E\n1);
\draw[gray] let \n1 = {int(mod(\i+2, 9))} in (M\i) to[bend left =0] (D\n1);

\draw let \n1 = {int(mod(\i+0, 9))} in (N\i) to[bend left = 0] (F\n1);
\draw let \n1 = {int(mod(\i+4, 9))} in (N\i) to[bend left = 0] (F\n1);
\draw[] let \n1 = {int(mod(\i+5, 9))} in (N\i) to[bend left = 0] (E\n1);

}
\end{scope}

\end{tikzpicture}
%%
%%%
%%
\begin{tikzpicture}[vtx/.style = {draw, circle, inner sep = .5 pt, font = \tiny}, lin/.style = { draw, inner sep = 1.5 pt, font = \tiny}, scale = .7, every node/.style={transform shape}]

\pgfmathsetmacro{\r}{4.5}
\pgfmathsetmacro{\rr}{\r/10}
\pgfmathsetmacro{\er}{3}
\pgfmathsetmacro{\fr}{6}

\pgfmathsetmacro{\lr}{0}
\pgfmathsetmacro{\mr}{3}
\pgfmathsetmacro{\nr}{6}

\pgfmathsetmacro{\loff}{180/9}
\pgfmathsetmacro{\eoff}{0}
\pgfmathsetmacro{\moff}{180/9}
\pgfmathsetmacro{\foff}{0}
\pgfmathsetmacro{\noff}{180/9}

\node[vtx,red, ultra thick] (D0) at (360*0/9: \r){000};%R00
\node[vtx,blue, ultra thick] (D1) at (360*1/9: \r){100};%B00
\node[vtx,green, ultra thick] (D2) at (360*2/9: \r){110};%G00
\node[vtx,red] (D3) at (360*3/9: \r){111};
\node[vtx,blue] (D4) at (360*4/9: \r){211};
\node[vtx,green] (D5) at (360*5/9: \r){221};
\node[vtx,red] (D6) at (360*6/9: \r){222};
\node[vtx,blue] (D7) at (360*7/9: \r){022};
\node[vtx,green] (D8) at (360*8/9: \r){002};
\node[right  = -1mm of D0] {$D$};

\node[lin,black, ultra thick] (L0) at (360*0/9+\loff: \r-\lr*\rr){*00};%X00
\node[lin,orange, ultra thick] (L1) at (360*1/9+\loff: \r-\lr*\rr){1*0};%Y00
\node[lin,cyan, ultra thick] (L2) at (360*2/9+\loff: \r-\lr*\rr){11*};%Z00
\node[lin,black] (L3) at (360*3/9+\loff: \r-\lr*\rr){*11};
\node[lin,orange] (L4) at (360*4/9+\loff: \r-\lr*\rr){2*1};
\node[lin,cyan] (L5) at (360*5/9+\loff: \r-\lr*\rr){22*};
\node[lin,black] (L6) at (360*6/9+\loff: \r-\lr*\rr){*22};
\node[lin,orange] (L7) at (360*7/9+\loff: \r-\lr*\rr){0*2};
\node[lin,cyan] (L8) at (360*8/9+\loff: \r-\lr*\rr){00*};
\node[right  = -1mm of L0] {$L$};

\node[vtx,red, ultra thick] (E0) at (360*0/9+\eoff: \r-\er*\rr){210};%R01
\node[vtx,blue] (E1) at (360*1/9+\eoff: \r-\er*\rr){121};
\node[vtx,green] (E2) at (360*2/9+\eoff: \r-\er*\rr){212};
\node[vtx,red] (E3) at (360*3/9+\eoff: \r-\er*\rr){021};
\node[vtx,blue] (E4) at (360*4/9+\eoff: \r-\er*\rr){202};
\node[vtx,green, ultra thick] (E5) at (360*5/9+\eoff: \r-\er*\rr){020};%G01
\node[vtx,red] (E6) at (360*6/9+\eoff: \r-\er*\rr){102};
\node[vtx,blue, ultra thick] (E7) at (360*7/9+\eoff: \r-\er*\rr){010};%B01
\node[vtx,green] (E8) at (360*8/9+\eoff: \r-\er*\rr){101};
\node[left  = -1mm of E0] {$E$};

\node[lin,black,  ultra thick] (M0) at (360*0/9+\moff: \r-\mr*\rr){*10};%X01
\node[lin,orange] (M1) at (360*1/9+\moff: \r-\mr*\rr){1*1};
\node[lin,cyan] (M2) at (360*2/9+\moff: \r-\mr*\rr){21*};
\node[lin,black] (M3) at (360*3/9+\moff: \r-\mr*\rr){*21};
\node[lin,orange] (M4) at (360*4/9+\moff: \r-\mr*\rr){2*2};
\node[lin,cyan, ultra thick] (M5) at (360*5/9+\moff: \r-\mr*\rr){02*};%Z01
\node[lin,black] (M6) at (360*6/9+\moff: \r-\mr*\rr){*02};
\node[lin, orange,ultra thick] (M7) at (360*7/9+\moff: \r-\mr*\rr){0*0};%Y01
\node[lin,cyan] (M8) at (360*8/9+\moff: \r-\mr*\rr){10*};
\node[right  = -1mm of M0] {$M$};

\node[vtx,red,,ultra thick] (F0) at (360*0/9+\foff: \r-\fr*\rr){120};%R02
\node[vtx,blue] (F1) at (360*1/9+\foff: \r-\fr*\rr){112};
\node[vtx,green] (F2) at (360*2/9+\foff: \r-\fr*\rr){011};
\node[vtx,red] (F3) at (360*3/9+\foff: \r-\fr*\rr){201};
\node[vtx,blue, ultra thick] (F4) at (360*4/9+\foff: \r-\fr*\rr){220};%B02
\node[vtx,green] (F5) at (360*5/9+\foff: \r-\fr*\rr){122};
\node[vtx,red] (F6) at (360*6/9+\foff: \r-\fr*\rr){012};
\node[vtx,blue] (F7) at (360*7/9+\foff: \r-\fr*\rr){001};
\node[vtx,green, ultra thick] (F8) at (360*8/9+\foff: \r-\fr*\rr){200};%G02
\node[left  = -1mm of F0] {$F$};

\node[lin,black, ,ultra thick] (N0) at (360*0/9+\noff: \r-\nr*\rr){*20};%X02
\node[lin, orange] (N1) at (360*1/9+\noff: \r-\nr*\rr){1*2};
\node[lin,cyan] (N2) at (360*2/9+\noff: \r-\nr*\rr){01*};
\node[lin,black] (N3) at (360*3/9+\noff: \r-\nr*\rr){*01};
\node[lin,orange, ultra thick] (N4) at (360*4/9+\noff:\r-\nr*\rr){2*0};%Y02
\node[lin,cyan] (N5) at (360*5/9+\noff: \r-\nr*\rr){12*};
\node[lin,black] (N6) at (360*6/9+\noff: \r-\nr*\rr){*12};
\node[lin,orange] (N7) at (360*7/9+\noff: \r-\nr*\rr){0*1};
\node[lin,cyan, ultra thick] (N8) at (360*8/9+\noff: \r-\nr*\rr){20*};%Z02
\node[left  = -1mm of N0] {$N$};

\begin{scope}[on background layer]
\foreach \i in {0,1, ..., 8}
{
\draw let \n1 = {int(mod(\i+0, 9))} in (L\i) to[bend left = 5] (D\n1);
\draw let \n1 = {int(mod(\i+1, 9))} in (L\i) to[bend left = -5] (D\n1);
\draw[] let \n1 = {int(mod(\i+8, 9))} in (L\i) to[bend left = 40] (F\n1);

\draw let \n1 = {int(mod(\i+0, 9))} in (M\i) to[bend left = 0] (E\n1);
\draw let \n1 = {int(mod(\i+7, 9))} in (M\i) to[bend left = 0] (E\n1);
\draw[] let \n1 = {int(mod(\i+2, 9))} in (M\i) to[bend left =-20] (D\n1);

\draw let \n1 = {int(mod(\i+0, 9))} in (N\i) to[bend left = 0] (F\n1);
\draw let \n1 = {int(mod(\i+4, 9))} in (N\i) to[bend left = 0] (F\n1);
\draw[] let \n1 = {int(mod(\i+5, 9))} in (N\i) to[bend left = -15] (E\n1);

}
\end{scope}

\end{tikzpicture}

\caption{Two drawings of the Gray Graph with $\mathbb{Z}_{9}$ symmetry. Note that the symmetry classes under the Pappus action (the columns in Table \ref{tab:SymmetryClassLabels}) are preserved under 3-fold rotation applied to this graph. 
The elements $*_{0j}$, $* \in \{X,Y,Z, R,B,G\}$ (that is, the first rows in Table \ref{tab:SymmetryClassLabels})  are shown thick.}
\label{fig:GrayZ9}
\end{center}
\end{figure}

Table~\ref{tab:Z9-symmetryClasses} lists the symmetry classes of points and lines that correspond to this realization of the Gray Graph using $\mathbb{Z}_{9}$ 
symmetry, shown in Figure \ref{fig:GrayZ9}. They are chosen so that 3-fold rotation permutes the ``Pappus'' symmetry classes (that is, the colored columns in 
Table~\ref{tab:SymmetryClassLabels}.

\begin{table}
\caption{Symmetry classes of points and lines corresponding to a $\mathbb{Z}_{9}$ realization of the Gray Graph.}
\label{tab:Z9-symmetryClasses}
\begin{align*}
\text{symmetry point class $D$: }& R_{00}, B_{00}, G_{00}, R_{10}, B_{10}, G_{10}, R_{20}, B_{20}, G_{20}   \\
=&\  000, 100, 110, 111, 211, 221, 222, 022, 002\\
\text{symmetry line class $L$: }& X_{00}, Y_{00}, Z_{00}, X_{10}, Y_{21}, Z_{10}, X_{20}, Y_{20}, Z_{20} \\  
=&\  \!*\!00, 1\!*\!0, 11*, *11, 2\!*\!1, 22*, *22, 0\!*\!2, 00*\\
\text{symmetry point class $F$: }&  R_{02}, B_{22}, G_{12},R_{12}, B_{02}, G_{22}, R_{22}, B_{12}, G_{02}\\
=&\ 120, 112, 011, 201, 220, 122, 012, 001, 200\\
\text{symmetry line class $N$: }& X_{02}, Y_{22}, Z_{12}, X_{12}, Y_{02}, Z_{22}, X_{22}, Y_{12}, Z_{02}\\ 
=&\ \!*\!20, 12*, 01*, *01, 2\!*\!0, 12*, *12, 0\!*\!1, 20*\\
\text{symmetry point class $E$: }& R_{01}, B_{11}, G_{21}, R_{11}, B_{21}, G_{01}, R_{21}, B_{01}, G_{11}\\
=& \ 210, 121,212,012,202,020,102,010,101\\
\text{symmetry line class $M$: }& X_{01}, Y_{11}, Z_{21}, X_{11}, Y_{21}, Z_{01}, X_{21}, Y_{01}, Z_{11}\\
= &\  \!*\!10, 1\!*\!1, 21*, *21,2\!*\!2,02*,*02,0\!*\!0, 10*
\end{align*}
\end{table}

The symmetry classes under the $GG$ action (the color-coded rows in Table \ref{tab:SymmetryClassLabels}) are 
preserved through interchanging the rings of symmetry classes, but the action is more complicated. Simply cyclically 
permuting the three rings (outside-middle-inside) in the second drawing in Figure  \ref{fig:GrayZ9} permutes the 
elements in the classes $R_{0i}, R_{1i}, R_{2i}$ and $X_{0i}, X_{1i}, X_{2i}$: for example, mapping the outer ring to the 
middle ring to the center ring applies the permutation $R_{00} = 000 \to R_{01} = 210 \to R_{02} = 120$. However, the 
permuting of the rings does not map the other $GG$ symmetry classes to themselves directly. To preserve the classes 
$B_{ij}$ (blue) and $Y_{ij}$ (orange), $i = 0,1,2$, permuting the rings plus a $-120^{\circ}$ rotation is required: for 
example, mapping the outer ring to the middle ring and rotating backwards by $120^{\circ}$ sends $B_{00} = 100 \to 
B_{01} = 010$, and doing that action again sends $B_{01} = 010 \to B_{02} = 220$. Similarly, to preserve $GG$ classes $G_{ij}
$ (green) and $Z_{ij}$ (cyan), $i = 0,1,2$, requires a ring permutation and a rotation by $+120^{\circ}$.

\subsection{Constructing a $\mathbb{Z}_{9}$ realization}

We use the 9-fold rotation of the graph shown in Figure \ref{fig:GrayZ9} to construct orbits of length 9 of points and lines, listed in 
Table~\ref{tab:Z9-symmetryClasses}. The 0th element of each orbit is shown thick in Figure \ref{fig:GrayZ9}. These symmetry classes 
can be seen as 9-cycles on the standard grid, viewed as a solid torus formed by identifying opposite sides of the $3 \times 3$ grid, 
shown in Figure~\ref{fig:Z9GrayGrid}. However, producing the symmetry classes listed in Table \ref{tab:Z9-symmetryClasses} is not 
as straightforward as just following the 9-gons. The sequence of points and lines obtained by following the solid 9-gon on the torus 
corresponds to alternating points in class $D$ and lines in class $L$. However, to alternate between $F$ and $N$ requires skipping 2 
steps on the doubled 9-gon, and to alternate between $E$ and $M$ requires skipping 4 steps on the dashed 9-gon.

\begin{figure}[!h]
\begin{center}

\def\lw{.4}

\begin{tikzpicture}  [
  3d view,perspective
,
 %isometric view
 ]  
\foreach \i in {0,1,2}{
\foreach \j in {0,1,2}{
\foreach \k in {0,1,2}{
\node[draw, circle, font = \scriptsize, fill = white, inner sep = 1pt] (\i\j\k) at ($3*(\i,\j,\k)$) {$\i\j\k$};
}}}

\path (000) node [draw, circle, ultra thick,inner sep = 0pt]{$\phantom{000}$};
\path(120) node[draw, circle, ultra thick,inner sep = 0pt] {$\phantom{120}$};
\path(210) node[draw, circle, ultra thick,inner sep = 0pt] {$\phantom{210}$};

\begin{scope}[on background layer]
\foreach \i in {0,1,2}{
\foreach \j in {0,1,2}{
%\foreach \k in {0,1,2}{
\draw [cyan,shorten >=-.75cm, line width = \lw mm] (\i\j0) -- (\i\j2);
%\draw[blue] ($(\i\j0)!1.2!(\i\j2)$) node[font = \tiny,  ] {$z_{\i\j}$};
\draw [orange,shorten >=-.75cm, line width = \lw mm] (\i0\j) -- (\i2\j);
%\draw[red] ($(\i0\j)!1.11!(\i2\j)$) node[font = \tiny, above ] {$y_{\i\j}$};
\draw [black,shorten >=-.85 cm, line width = .1 mm] (0\i\j) -- (2\i\j);
%\draw[green!60!black] ($(0\i\j)!1.22!(2\i\j)$) node[font = \tiny, left ] {$x_{\i\j}$};
}}

%%%%%%%%%% handwork for line label positions %%%%%%%%%%%

\foreach \i/\j in {0/2,2/2,0/1,1/1,1/0,2/0}{
\path ($(\i\j0)!1.11!(\i\j2)$) node[cyan, font=\scriptsize] {${\i\j*}$};
%\path [cyan,shorten >=-.75cm, line width = \lw mm] (\i\j0) -- (\i\j2) node[font = \scriptsize, pos = 1.17] {${\i\j*}$};
%%\draw [orange,shorten >=-.75cm, line width = \lw mm] (\i0\j) -- (\i2\j) node[font = \scriptsize, pos = 1.38] {${\i\!*\!\j}$};
%%\draw [black,shorten >=-.75cm, line width = .1 mm] (0\i\j) -- (2\i\j) node[font = \scriptsize, pos = 1.22,  ] {${*\i\j}$};
}
\path ($(120)!1.23!(122)$) node[cyan, font=\scriptsize] {${12*}$};
\path ($(210)!1.23!(212)$) node[cyan, font=\scriptsize] {${21*}$};
\path ($(000)!1.1!(002)$) node[cyan, font=\scriptsize, right] {${00*}$};

%%%%%%%%%%
\foreach \i/\j in {1/2,2/2,0/1,2/1,0/0,1/0}{
\path ($(\i0\j)!1.23!(\i2\j)$) node[orange, font=\scriptsize] {${\i\!*\!\j}$};
}

\path ($(002)!1.28!(022)$) node[orange, font=\scriptsize] {${0\!*\!2}$};
\path ($(101)!1.2!(121)$) node[orange, font=\scriptsize, above] {${1\!*\!1}$};
\path ($(200)!1.2!(220)$) node[orange, font=\scriptsize,above] {${2\!*\!0}$};
%%%%%%%%%%%%%%%
\foreach \i/\j in {1/2,0/2,1/1,2/1,0/0,2/0}{
\path ($(0\i\j)!1.17!(2\i\j)$) node[black, font=\scriptsize,] {${*\i\j}$};
}

\path ($(022)!1.2!(222)$) node[black, font=\scriptsize, above] {${*22}$};
\path ($(001)!1.27!(201)$) node[black, font=\scriptsize] {${*01}$};
\path ($(010)!1.2!(210)$) node[black, font=\scriptsize,above] {${*10}$};
%%%%%%%%%%%%%%%%%%%%%%%%%%%%%%%%%%%%%%%%

\draw[ultra thick, -latex, ] (000) -- (100) -- (110) -- (111) -- (211) -- (221) -- (222) -- ($(222)!-.2!(022)$);
\draw[ultra thick, -latex]($(222)!1.2!(022)$) -- (022);
\draw[ultra thick, latex-]($(022)!-.2!(002)$) -- (022) ;
\draw[ultra thick,, -latex]($(022)!1.2!(002)$) -- (002) -- ($(002)!-.13!(000)$) ;
\draw[ultra thick, -latex] ($(002)!1.2!(000)$)--(000) ;
%R01 start

\draw[ultra thick, dashed, latex-]($(210)!-.2!(010)$)-- (210);
\draw[ultra thick, dashed, -latex]($(210)!1.2!(010)$) -- (010);
\draw [ultra thick, dashed, -latex] (010)-- (020) -- (021)--(121); 
\draw[ultra thick, dashed, latex-]($(121)!-.25!(101)$)-- (121);
\draw[ultra thick, dashed, -latex]($(121)!1.25!(101)$) -- (101);
\draw[ultra thick, dashed, -latex](101) -- (102) -- (202) -- (212) ;
\draw[ultra thick, dashed, latex-]($(212)!-.2!(210)$) -- (212);
\draw[ultra thick, dashed, -latex]($(212)!1.2!(210)$) -- (210);
%\draw[ultra thick, dashed] (212) --  (210);

\draw[thick,double, latex-] (120) -- (220);

%(220) -- (200) 
\draw[thick, ,double,-latex]($(220)!-.2!(200)$) -- (220);
\draw[thick,double,latex-]($(220)!1.2!(200)$) -- (200);

\draw[thick,double,latex-](200)-- (201) ;
\draw[thick,double,-latex]($(201)!-.2!(001)$) -- (201);
\draw[thick,double,latex-]($(201)!1.2!(001)$) -- (001);

\draw[thick,double,latex-]  (001) -- (011) -- (012) -- (112)--(122); %(122) -- (120)
\draw[thick,double,-latex]($(122)!-.2!(120)$) -- (122);
\draw[thick,double,latex-]($(122)!1.2!(120)$) -- (120);

\end{scope}
\end{tikzpicture}

\caption{Identifying $\mathbb{Z}_{9}$ symmetry classes in the Gray Grid. Successive points in symmetry class $D$ occur by traveling one step 
along the solid black 9-gon, successive points in symmetry class $F$ occur by traveling two steps along the doubled 9-gon, and successive points 
in symmetry class $E$ occur by travelling four steps along the dashed 9-gon (in each case, following the direction of the arrows). The 0th element 
of each point class is indicated by a heavy circle.}
\label{fig:Z9GrayGrid}
\end{center}
\end{figure}
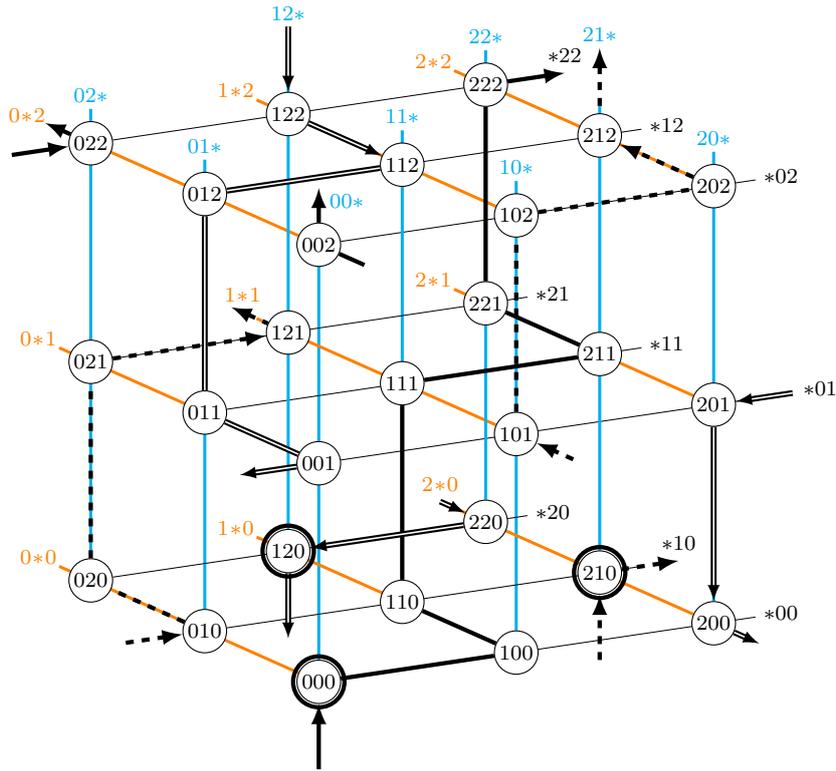

Using the drawings in Figure \ref{fig:GrayZ9}, it is straightforward to read off the voltages for the $\mathbb{Z}_{9}$ voltage graph, shown in 
Figure~\ref{fig:GrayZ9voltOriginal}. 
As usual, we then add and subtract voltages to produce a reduced Levi graph with a spanning path~\ref{fig:GrayZ9voltNice3}, to aid in applying 
known algorithms for constructing configurations with reduced Levi graphs of this type.

Figure~\ref{fig:grayZ9-3} shows the corresponding drawing of the Gray Graph emphasizing the $\mathbb{Z}_{9}$ symmetry classes.

\begin{figure}
\begin{center}
\begin{subfigure}[t]{.4\linewidth}
\centering
\begin{tikzpicture}[vtx/.style = {draw, circle, inner sep = .5 pt, }, line/.style = { draw, inner sep = 1.5 pt, },lbl/.style={midway, font = \scriptsize, inner sep = 2, fill = white}]

\pgfmathsetmacro{\r}{1.5}
\node[vtx, orange] (D) at (360*0/6:\r){$D$};
\node[line, orange] (L) at (360*1/6:\r){$L$};
\node[vtx, magenta] (F) at (360*2/6:\r){$F$};
\node[line, magenta] (N) at (360*3/6:\r){$N$};
\node[vtx,cyan] (E) at (360*4/6:\r){$E$};
\node[line,cyan ] (M) at (360*5/6:\r){$M$};

\draw[->] (L) to[bend left = 30] node[lbl]{1} (D);
\draw[->, ultra thick] (L) to[bend right = 30] (D);
\draw[->,] (L) to[] node[lbl]{-1}  (F);

\draw[->] (M) to[bend left = 30] node[lbl]{-2} (E);
\draw[->, ultra thick] (M) to[bend right = 30] (E);
\draw[->,] (M) to[] node[lbl]{2} (D);

\draw[->] (N) to[bend left = 30] node[lbl]{4} (F);
\draw[->, ultra thick] (N) to[bend right = 30] (F);
\draw[->, ] (N) to[]  node[lbl]{5} (E);

\node[below = of D] {$\mathbb{Z}_{9}$};

\end{tikzpicture}
\caption{The reduced Levi graph corresponding to the graphs shown in Figure \ref{fig:GrayZ9}. Thick edges correspond to labels of 0.}
\label{fig:GrayZ9voltOriginal}
\end{subfigure}
\hspace{1cm}
\begin{subfigure}[t]{.3\linewidth}
\centering
\begin{tikzpicture}[vtx/.style = {draw, circle, inner sep = .5 pt, }, line/.style = { draw, inner sep = 1.5 pt, },lbl/.style={midway, font = \scriptsize, inner sep = 2, fill = white}]

\pgfmathsetmacro{\r}{1.5}
\node[vtx, orange] (D) at (360*0/6:\r){$D$};
\node[line, orange] (L) at (360*1/6:\r){$L$};
\node[vtx, magenta] (F) at (360*2/6:\r){$F$};
\node[line,magenta] (N) at (360*3/6:\r){$N$};
\node[vtx,cyan] (E) at (360*4/6:\r){$E$};
\node[line, cyan] (M) at (360*5/6:\r){$M$};

\draw[->] (L) to[bend left = 30] node[lbl]{1} (D);
\draw[->, ultra thick] (L) to[bend right = 30] (D);
\draw[->,ultra thick] (L) to[]   (F);

\draw[->] (N) to[bend left = 30] node[lbl]{4} (F);
\draw[->, ultra thick] (N) to[bend right = 30] (F);
\draw[->, ultra thick] (N) to[]   (E);

\draw[->] (M) to[bend left = 30] node[lbl]{2} (E);
\draw[->, ultra thick] (M) to[bend right = 30] (E);
\draw[->] (M) to[] node[lbl]{-1} (D);

\node[below = of D] {$\mathbb{Z}_{9}$};

\end{tikzpicture}
\caption{Adding and subtracting voltages as necessary results in a $\mathbb{Z}_{9}$ voltage graph with a Hamiltonian spanning path of 0-labeled edges.}
\label{fig:GrayZ9voltNice3}
\end{subfigure}
\hfill
\begin{subfigure}[t]{.68\linewidth}
\centering
\begin{tikzpicture}[vtx/.style = {draw, circle, inner sep = .5 pt, font = \tiny}, lin/.style = { draw, inner sep = 1.5 pt, font = \tiny}]

\pgfmathsetmacro{\r}{5.2}
\pgfmathsetmacro{\rr}{\r/8}

\pgfmathsetmacro{\lr}{1}
\pgfmathsetmacro{\fr}{3}
\pgfmathsetmacro{\nr}{4}
\pgfmathsetmacro{\er}{6}
\pgfmathsetmacro{\mr}{7}

\pgfmathsetmacro{\loff}{0}
\pgfmathsetmacro{\eoff}{0}
\pgfmathsetmacro{\moff}{0}
\pgfmathsetmacro{\foff}{0}
\pgfmathsetmacro{\noff}{0}

\node[vtx,%red, 
orange, ultra thick] (D0) at (360*0/9: \r){000};
\node[vtx,%blue
orange] (D1) at (360*1/9: \r){100};
\node[vtx,%green]
,orange] (D2) at (360*2/9: \r){110};
\node[vtx%,red]
,orange] (D3) at (360*3/9: \r){111};
\node[vtx,%blue] 
,orange](D4) at (360*4/9: \r){211};
\node[vtx,%green] 
,orange](D5) at (360*5/9: \r){221};
\node[vtx,%red]
,orange] (D6) at (360*6/9: \r){222};
\node[vtx,%blue]
,orange] (D7) at (360*7/9: \r){022};
\node[vtx,%green] 
,orange](D8) at (360*8/9: \r){002};
\node[below  = -1mm of D0] {$D$};

\node[lin,%black, 
,orange, ultra thick] (L0) at (360*0/9+\loff: \r-\lr*\rr){*00};%X00
\node[lin,orange] (L1) at (360*1/9+\loff: \r-\lr*\rr){1*0};
\node[lin,%cyan] 
,orange](L2) at (360*2/9+\loff: \r-\lr*\rr){11*};
\node[lin,%black]
,orange] (L3) at (360*3/9+\loff: \r-\lr*\rr){*11};
\node[lin,orange] (L4) at (360*4/9+\loff: \r-\lr*\rr){2*1};
\node[lin,%cyan]
,orange] (L5) at (360*5/9+\loff: \r-\lr*\rr){22*};
\node[lin,%black]
,orange] (L6) at (360*6/9+\loff: \r-\lr*\rr){*22};
\node[lin,orange] (L7) at (360*7/9+\loff: \r-\lr*\rr){0*2};
\node[lin,%cyan]
,orange] (L8) at (360*8/9+\loff: \r-\lr*\rr){00*};
\node[below  = -1mm of L0] {$L$};

\node[vtx,%green, 
magenta,ultra thick] (F0) at (360*0/9+\foff: \r-\fr*\rr){200};
\node[vtx,%red,,]
magenta,] (F1) at (360*1/9+\foff: \r-\fr*\rr){120};
\node[vtx,magenta] (F2) at (360*2/9+\foff: \r-\fr*\rr){112};
\node[vtx,magenta] (F3) at (360*3/9+\foff: \r-\fr*\rr){011};
\node[vtx,magenta] (F4) at (360*4/9+\foff: \r-\fr*\rr){201};
\node[vtx,magenta, ] (F5) at (360*5/9+\foff: \r-\fr*\rr){220};
\node[vtx,magenta] (F6) at (360*6/9+\foff: \r-\fr*\rr){122};
\node[vtx,magenta] (F7) at (360*7/9+\foff: \r-\fr*\rr){012};
\node[vtx,magenta] (F8) at (360*8/9+\foff: \r-\fr*\rr){001};
\node[below  = -1mm of F0] {$F$};

\node[lin,magenta ,ultra thick] (N0) at (360*0/9+\noff:\r-\nr*\rr){2*0};
\node[lin,magenta] (N1) at (360*1/9+\noff: \r-\nr*\rr){12*};
\node[lin,magenta] (N2) at (360*2/9+\noff: \r-\nr*\rr){*12};
\node[lin,magenta] (N3) at (360*3/9+\noff: \r-\nr*\rr){0*1};
\node[lin,magenta] (N4) at (360*4/9+\noff: \r-\nr*\rr){20*};
\node[lin,magenta,] (N5) at (360*5/9+\noff: \r-\nr*\rr){*20};
\node[lin, magenta] (N6) at (360*6/9+\noff: \r-\nr*\rr){1*2};
\node[lin,magenta] (N7) at (360*7/9+\noff: \r-\nr*\rr){01*};
\node[lin,magenta] (N8) at (360*8/9+\noff: \r-\nr*\rr){*01};
\node[below  = -1mm of N0] {$N$};

\node[vtx,cyan, ultra thick] (E0) at (360*0/9+\eoff: \r-\er*\rr){202};
\node[vtx,cyan] (E1) at (360*1/9+\eoff: \r-\er*\rr){020};
\node[vtx,cyan] (E2) at (360*2/9+\eoff: \r-\er*\rr){102};
\node[vtx,cyan] (E3) at (360*3/9+\eoff: \r-\er*\rr){010};
\node[vtx,cyan] (E4) at (360*4/9+\eoff: \r-\er*\rr){101};
\node[vtx,cyan, ] (E5) at (360*5/9+\eoff: \r-\er*\rr){210};
\node[vtx,cyan] (E6) at (360*6/9+\eoff: \r-\er*\rr){121};
\node[vtx,cyan] (E7) at (360*7/9+\eoff: \r-\er*\rr){212};
\node[vtx,cyan] (E8) at (360*8/9+\eoff: \r-\er*\rr){021};
\node[below  = -1mm of E0] {$E$};

\node[lin,cyan, ultra thick] (M0) at (360*0/9+\moff: \r-\mr*\rr){*02};
\node[lin, cyan,] (M1) at (360*1/9+\moff: \r-\mr*\rr){0*0};
\node[lin,cyan] (M2) at (360*2/9+\moff: \r-\mr*\rr){10*};
\node[lin,cyan, ] (M3) at (360*3/9+\moff: \r-\mr*\rr){*10};
\node[lin,cyan] (M4) at (360*4/9+\moff: \r-\mr*\rr){1*1};
\node[lin,cyan] (M5) at (360*5/9+\moff: \r-\mr*\rr){21*};
\node[lin,cyan] (M6) at (360*6/9+\moff: \r-\mr*\rr){*21};
\node[lin,cyan] (M7) at (360*7/9+\moff: \r-\mr*\rr){2*2};
\node[lin,cyan] (M8) at (360*8/9+\moff: \r-\mr*\rr){02*};
\node[left  = -1mm of M0] {$M$};

\begin{scope}[on background layer]
\foreach \i in {0,1, ..., 8}
{
\draw let \n1 = {int(mod(\i+0, 9))} in (L\i) to[bend left = 0] (D\n1);
\draw let \n1 = {int(mod(\i+1, 9))} in (L\i) to[bend left = -20] (D\n1);
\draw[] let \n1 = {int(mod(\i+0, 9))} in (L\i) to[bend left = 0] (F\n1);

\draw let \n1 = {int(mod(\i+0, 9))} in (N\i) to[bend left = 0] (F\n1);
\draw[] let \n1 = {int(mod(\i+4, 9))} in (N\i) to[bend left = -40] (F\n1);
\draw[] let \n1 = {int(mod(\i+0, 9))} in (N\i) to[bend left = -0] (E\n1);

\draw[] let \n1 = {int(mod(\i+0, 9))} in (M\i) to[bend left = 0] (E\n1);
\draw[] let \n1 = {int(mod(\i+2, 9))} in (M\i) to[bend left = -35] (E\n1);
\draw[] let \n1 = {int(mod(\i+8, 9))} in (M\i) to[bend left =28] (D\n1);
}

\foreach \i in {0}
{
\draw[ultra thick] let \n1 = {int(mod(\i+0, 9))} in (L\i) to[bend left = 0] (D\n1);
%\draw let \n1 = {int(mod(\i+1, 9))} in (L\i) to[bend left = -20] (D\n1);
\draw[ultra thick] let \n1 = {int(mod(\i+0, 9))} in (L\i) to[bend left = 0] (F\n1);

\draw[ultra thick] let \n1 = {int(mod(\i+0, 9))} in (N\i) to[bend left = 0] (F\n1);
%\draw[] let \n1 = {int(mod(\i+4, 9))} in (N\i) to[bend left = -40] (F\n1);
\draw[ultra thick] let \n1 = {int(mod(\i+0, 9))} in (N\i) to[bend left = -0] (E\n1);

\draw[ultra thick] let \n1 = {int(mod(\i+0, 9))} in (M\i) to[bend left = 0] (E\n1);
%\draw[] let \n1 = {int(mod(\i+2, 9))} in (M\i) to[bend left = -35] (E\n1);
%\draw[] let \n1 = {int(mod(\i+8, 9))} in (M\i) to[bend left =28] (D\n1);
}
\end{scope}

\end{tikzpicture}
\caption{The Gray graph viewed as the expanded Levi graph, emphasizing the spanning path of 0 indicated in the voltage graph (which can be seen following the ``spine'' of the graph, shown thick). As usual, points are circular nodes and lines are rectangular nodes.
%The $\mathbb{Z}_{9}$ drawing emphasizing the spanning path of 0 indicated in the voltage graph (which can be seen following the ``spine'' of the graph). As usual, points are circular nodes and lines are rectangular nodes.
}
\label{fig:grayZ9-3}
\end{subfigure}

\caption{Helpful realizations of the Gray graph and the $\mathbb{Z}_{9}$ voltage graph.}
\label{fig:Z9Good}
\end{center}
\end{figure}

The voltage graph shown in Figure \ref{fig:GrayZ9voltNice3} is an example of a voltage graph that corresponds to a \emph{multilateral chiral 3-configuration}, as described in \cite{Ber2013} as a configuration whose reduced Levi graph over some $\mathbb{Z}_{m}$ is 3-regular and alternates double (parallel) arcs and single arcs. That paper provided an algorithm for constructing corresponding geometric configurations. The Configuration Construction Lemma, described in that paper and elsewhere, says, essentially, the following: Given a set of points $P_{i}$ that are cyclically labeled as the vertices of a regular $m$-gon centered at the origin $\mc{O}$, construct the circle $C$ passing through points $P_{b}$, $\mc{O}$, $P_{b-d}$. If a point $Q$ lies on $C$, and if $Q'$ is the rotation of $Q$ by $\frac{2 \pi d}{m}$, then the line $QQ'$ passes through $P_{b}$. This is particularly useful if the point $Q$ is constructed as the intersection of some other line constructed in the configuration with the circle $C$. In this case, to realize the given reduced Levi graph, the required process is as follows:

\begin{algorithm}\label{alg:Z9construction} To construct a geometric realization of a 3-configuration with the reduced Levi graph in Figure \ref{fig:GrayZ9voltNice3}, do the following (with index arithmetic modulo 9):
\begin{enumerate} 
\item Construct points $D_{i}$, $i = 0, 1, \ldots, 8$ as the vertices of a regular 9-gon; specifically, let $D_{i} = (\cos(2 \pi i/9), \sin(2 \pi i/9))$.
\item Construct lines $L_{i} = D_{i}D_{i+1}$;
\item Place a point $F_{0}(t)$ arbitrarily (parameterized by $t$) on line $L_{0}$ and let $F_{i}(t)$ be the rotation of $F_{0}$ through $2 \pi i/9$ about the origin;
\item Construct lines $N_{i}(t) = F_{i}(t)F_{i+4}(t)$
\item Construct a circle $\mc{C}$ through the three points $D_{-1}$, $D_{-1-2}$ and the origin;
\item Construct point $E_{0}$ to be the intersection of line $N_{0}(t)$ with $\mc{C}$, if it exists. If no point of intersection exists, then {\bf the algorithm fails}. 
If a point of intersection exists, let $E_{i}$ be the rotation of $E_{0}$ through $2 \pi i/9$ about the origin;
\item Construct lines $M_{i} = E_{i}E_{i+2}$. The line $M_{i}$ will pass through the point $D_{i-1}$.
\end{enumerate}
\end{algorithm}

\begin{theorem}\label{thm:nonrealizeZ9}
There are exactly two positions of $F_{0}(t)$ on $L_{0}$ so that the line $N_{0}(t) = F_{0}(t)F_{4}(t)$ intersects the circle $\mc{C}$ passing through the three points 
$D_{-1}$, $D_{-3}$ and $\mc{O}$. These two positions are precisely the points  $P_{1} =  L_{0} \cap L_{-3} $ and  $P_{2} = L_{0} \cap L_{3}$, and at these positions, 
the line $N_{0}(t)$ is tangent to $\mc{C}$. For all other values of $t$, the line $N_{0}(t)$ does not intersect $\mc{C}$. Therefore, any straight-line realization of the 
Gray Configuration with $\mathbb{Z}_{9}$ symmetry is a weak realization, because there are extra incidences caused by the fact that $F_{0}$ lies on two lines $L_{i}$ 
(rather than only one).
\end{theorem}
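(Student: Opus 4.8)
The plan is to move everything into the complex plane, writing $D_k=\omega^k$ with $\omega=e^{2\pi i/9}$, so that $L_i$ is the chord through $\omega^i,\omega^{i+1}$, the moving point is $F_0=z\in L_0$, its rotate is $F_4=\omega^4 z$, and $N_0(t)$ is the line through $z$ and $\omega^4 z$. The question of for which $z$ the line $N_0(t)$ meets $\mc C$ then reduces to the sign of the discriminant of the quadratic that $\mc C$ cuts out on $N_0(t)$. Concretely, a circle through $\mc O$ has the form $|w|^2+2\,\mathrm{Re}(\bar B w)=0$ for one complex constant $B$, and requiring it to pass through $\omega^{-1}$ and $\omega^{-3}$ fixes $B$. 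Writing a point of $N_0(t)$ as $w=z\bigl(1+\lambda(\omega^4-1)\bigr)$, $\lambda\in\mathbb R$, and substituting yields a real quadratic $A_2\lambda^2+A_1\lambda+A_0=0$; the line meets $\mc C$ iff $A_1^2-4A_2A_0\ge 0$ and is tangent iff equality holds. I therefore aim to show $A_1^2-4A_2A_0\le 0$ everywhere, with equality at exactly two values of $z$.

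The heart of the proof is the simplification of this discriminant. With $a=\omega^4-1$ and $\gamma=\bar B\,a$, direct expansion and completion of a square give
\[
4A_2A_0-A_1^2 \;=\; 4\bigl[(\alpha\,|z|^2+\mathrm{Im}(\gamma z))^2-|\gamma|^2\,|z|^2\bigr],\qquad \alpha=\mathrm{Im}(\omega^4-1)=\sin\tfrac{8\pi}{9}.
\]
Two facts peculiar to ninth roots of unity then collapse the bracket: (i) $\gamma(\omega-1)$ is real, so $\mathrm{Im}(\gamma z)$ is \emph{constant} along $L_0$, equal to $k:=\mathrm{Im}(\gamma)$; and (ii) the identity $2\sin\tfrac{\pi}{9}\sin\tfrac{7\pi}{18}=\sin\tfrac{2\pi}{9}$ yields $|\gamma|^2=4\alpha k$. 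Substituting both turns the bracket into the perfect square $(\alpha|z|^2-k)^2$, so $A_1^2-4A_2A_0=-4(\alpha|z|^2-k)^2\le 0$. Thus $N_0(t)$ is \emph{never} a secant of $\mc C$; it meets $\mc C$ only when $\alpha|z|^2=k$, and there it is tangent. Since $k/\alpha=4\cos^2\tfrac{\pi}{9}$, tangency occurs precisely when $|F_0|=2\cos\tfrac{\pi}{9}$.

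It remains to locate these points and draw the conclusion. On $L_0$, whose distance from $\mc O$ is $\cos\tfrac{\pi}{9}$, the locus $|z|=2\cos\tfrac{\pi}{9}$ consists of exactly two points, symmetric about the foot of the perpendicular from $\mc O$. Writing each chord $L_i$ in the form $\mathrm{Re}(e^{-i\mu_i}z)=\cos\tfrac{\pi}{9}$, where $\mu_i$ is the mid-angle of its two endpoints, a one-line computation gives $|L_0\cap L_3|=|L_0\cap L_{-3}|=2\cos\tfrac{\pi}{9}$, so these two intersections are exactly the tangent positions $P_1,P_2$. Finally, placing $F_0$ at either point forces the point class $F$ to lie on a second line of class $L$ (namely $L_{\pm 3}$), an incidence absent from the reduced Levi graph; the resulting figure therefore has points of line-degree $4$, so every straight-line $\ZZ_9$ realisation is only weak, in fact a $(27_4)$.

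The step I expect to be the real obstacle is the algebraic collapse above: $A_1^2-4A_2A_0$ is a genuine quartic in the line parameter, and it becomes a perfect square only because of the two ninth-root identities, so the cancellations must be verified symbolically rather than numerically. As a conceptual safeguard I would keep the dual picture in mind: the lines $\{N_0(t)\}$ join corresponding points of $L_0$ and $\omega^4 L_0$ under a projectivity that is not a perspectivity (its only self-corresponding point would have to be $\mc O\notin L_0$), so by Steiner's construction they envelope a conic $\mc K$; the computation is precisely the assertion that $\mc C$ is inscribed in $\mc K$, internally tangent at the two points lying over $P_1,P_2$, which explains geometrically why every line of the family either misses $\mc C$ or merely touches it.
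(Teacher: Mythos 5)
Your proposal is correct, and it reaches the conclusion by a genuinely different route from the paper. The paper first proves a lemma (Lemma~\ref{lem:parab}) identifying the envelope of the lines $N_{0}(t)$ as a parabola $\mc{P}$ with focus $\mc{O}$, then writes down explicit Cartesian equations for $\mc{P}$ and $\mc{C}$ in a conveniently rotated frame, uses \emph{Mathematica} to solve the circle--parabola system and finds only the two tangency points $T_{1},T_{2}$, and finally checks by a determinant computation that the tangent lines at $T_{1},T_{2}$ are the lines $P_{1}P_{1}'$ and $P_{2}P_{2}'$. You instead bypass the envelope entirely: parametrizing $N_{0}(t)$ as $w=z\bigl(1+\lambda(\omega^{4}-1)\bigr)$ and intersecting with $\mc{C}$, you reduce everything to the sign of one discriminant, which collapses to $-4\bigl(\alpha|z|^{2}-k\bigr)^{2}$ via two elementary ninth-root identities (both of which I checked: $\gamma(\omega-1)\in\mathbb{R}$ and $|\gamma|^{2}=4\alpha k$ do hold, and $k/\alpha=4\cos^{2}(\pi/9)$ matches $|L_{0}\cap L_{\pm3}|$). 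This buys a computer-free, fully symbolic proof in which non-transversality everywhere and the location of the two tangency positions drop out of a single perfect square; what it gives up is the explicit geometric picture of the paper (the parabola, its directrix, and the points $T_{1},T_{2}$), which you only recover informally in your closing Steiner-envelope remark --- that remark is essentially the paper's Lemma~\ref{lem:parab}, so if you wanted the geometric explanation to be part of the proof rather than a ``safeguard,'' you would need to prove it as the paper does.
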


To prove this theorem, we will use the following lemma:

\begin{lemma}\label{lem:parab}
Let $\ell$ be a line and let $\ell'$ be the rotate of $\ell$ through some angle $\theta$ about a point $\mc{O}$. Let $P$ be an arbitrary point on $\ell$ and $P'$ 
the rotate through $\theta$ about $\mc{O}$ of $P$ (thus $P'$ lies on $\ell'$). The envelope of the lines $\overline{PP'}$ is a parabola with focus $\mc{O}$ 
and directrix formed by the line $\overline{O_{\ell} O_{\ell'}}$, where $O_{\ell}$, $O_{\ell'}$ are formed by reflecting $\mc{O}$ over $\ell$, $\ell'$ respectively.
\end{lemma}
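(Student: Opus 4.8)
The plan is to place the centre $\mc{O}$ at the origin and use complex coordinates, identifying a point of the plane with a complex number, so that rotation through $\theta$ about $\mc{O}$ is multiplication by $e^{i\theta}$. The central observation is that, since rotation is an isometry fixing $\mc{O}$, we have $|\mc{O}P| = |\mc{O}P'|$; hence the triangle $\mc{O}PP'$ is isosceles and the foot of the perpendicular from $\mc{O}$ to the line $\overline{PP'}$ is exactly the midpoint $M$ of $PP'$. Consequently the reflection $\mc{O}^{*}$ of $\mc{O}$ in the line $\overline{PP'}$ equals $\mc{O}^{*} = 2M = P + P' = (1 + e^{i\theta})P$, and, by definition of reflection, $\overline{PP'}$ is the perpendicular bisector of the segment $\mc{O}\mc{O}^{*}$. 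This reformulation replaces the moving line by the single moving point $\mc{O}^{*}$.

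The next step is to identify the locus traced by $\mc{O}^{*}$ as $P$ runs over $\ell$. Using the identity $1 + e^{i\theta} = 2\cos(\theta/2)\,e^{i\theta/2}$, the map $P \mapsto \mc{O}^{*}$ is the spiral similarity (dilative rotation) centred at $\mc{O}$ with rotation angle $\theta/2$ and scaling ratio $2\cos(\theta/2)$; hence $\mc{O}^{*}$ runs over a straight line, namely the image of $\ell$ under this similarity. To pin this line down, I would parametrize $\ell$ through its foot of perpendicular $F_{\ell} = r e^{i\phi}$ from $\mc{O}$ (so $r = \mathrm{dist}(\mc{O},\ell)$), writing a general point as $P = e^{i\phi}(r + it)$, $t \in \mathbb{R}$. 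A short computation then shows that the parameter values $t = \mp r\tan(\theta/2)$ send $\mc{O}^{*}$ to the points $2 r e^{i\phi}$ and $2 r e^{i(\phi+\theta)}$, which are precisely $O_{\ell} = 2F_{\ell}$ and $O_{\ell'} = 2 e^{i\theta} F_{\ell}$, the reflections of $\mc{O}$ in $\ell$ and in its rotate $\ell'$. Since these two points are distinct whenever $0 < \theta < \pi$ and $\mc{O} \notin \ell$, the locus of $\mc{O}^{*}$ is exactly the line $d := \overline{O_{\ell}O_{\ell'}}$, and the map $P \mapsto \mc{O}^{*}$ is onto $d$.

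Finally I would invoke the classical reflection (perpendicular-bisector) characterization of the parabola: for a parabola with focus $F$ and directrix $d$, the reflection of $F$ in any tangent line lies on $d$, and conversely, for every $Q \in d$ the perpendicular bisector of $FQ$ is tangent to the parabola; thus the tangent lines are exactly the perpendicular bisectors of the segments $FQ$, $Q \in d$, and their envelope is the parabola itself. In our situation every line $\overline{PP'}$ is the perpendicular bisector of $\mc{O}\mc{O}^{*}$ with $\mc{O}^{*}$ ranging over all of $d = \overline{O_{\ell}O_{\ell'}}$, so the family $\{\overline{PP'}\}$ coincides with the full family of tangents to the parabola with focus $\mc{O}$ and directrix $d$; hence its envelope is that parabola, as claimed.

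The main obstacle is less the algebra---the middle computation is routine once the spiral-similarity reformulation $\mc{O}^{*} = (1+e^{i\theta})P$ is in hand---than getting the logic of the envelope claim exactly right. Tangency of each individual line is not sufficient on its own, so I must verify that $P \mapsto \mc{O}^{*}$ \emph{surjects} onto the whole directrix line, ensuring that we obtain every tangent and therefore the entire parabola rather than a proper arc. I should also dispose of the degenerate cases $\theta \in \{0,\pi\}$ and $\mc{O} \in \ell$, where $O_{\ell}$ and $O_{\ell'}$ coincide and the ``parabola'' collapses to a point or to a pencil of lines through $\mc{O}$.
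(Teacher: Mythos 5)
Your proposal is correct and rests on the same key fact as the paper's own proof: the reflection of the focus in a tangent line of a parabola lies on the directrix, so everything reduces to showing that the reflection of $\mc{O}$ in $\overline{PP'}$ traces the line $\overline{O_{\ell}O_{\ell'}}$. Where the paper disposes of this collinearity with an unspecified appeal to ``similar triangles and angle-chasing,'' you make it explicit---the reflection equals $P+P'=(1+e^{i\theta})P$, so $P\mapsto P+P'$ is a spiral similarity carrying $\ell$ onto the line through $O_{\ell}$ and $O_{\ell'}$---and this computation also yields surjectivity onto the directrix (hence the full parabola as envelope, not merely an arc of it), a point the paper's proof leaves implicit.
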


\begin{proof}
Recall that to construct a parabola with a given focus and tangent to two given lines, the directrix is formed by reflecting the focus over each of those two lines 
and joining the image points. Let $\mc{P}$ be the parabola with focus $\mc{O}$ that is tangent to the lines $\ell$ and $\ell'$. Using similar triangles and angle-chasing, 
it is straightforward to show that in the above situation, if we reflect $\mc{O}$ over the line $PP'$ to form the point $O_{PP'}$,  the three points $O_{\ell}$, $O_{\ell'}$ 
and $O_{PP'}$ are collinear; thus, the variable line $PP'$ is tangent to the parabola $\mc{P}$ for all choices of point $P$. In addition, if $M_{\ell}$, $M_{\ell'}$ are the 
feet of the perpendiculars to $\ell$, $\ell'$ passing through $\mc{O}$, the vertex $V$ of $\mc{P}$ is the midpoint of the segment $M_{\ell}M_{\ell'}$, and the line  
$M_{\ell}M_{\ell'}$ is tangent to $\mc{P}$ at $V$.
\end{proof}

\begin{proof}[Proof of Theorem \ref{thm:nonrealizeZ9}]
Consider the setup of Algorithm \ref{alg:Z9construction}, except for convenience, choose starting coordinates 
\[D_{i} 
%=\left(\cos\left( \frac{2 \pi i}{9} + \frac{4 \pi}{9} + \frac{\pi}{18}\right), \sin\left( \frac{2 \pi i}{9} +  \frac{4 \pi}{9} + \frac{\pi}{18}\right) \right)
 = \left(\cos\left( \frac{2 \pi i}{9} + \frac{17\pi}{18}\right), \sin\left( \frac{2 \pi i}{9} + \frac{17\pi}{18}\right) \right).\] 
 
With this choice of coordinates, by Lemma \ref{lem:parab}, the lines $L_{0}$ and $L_{4}$ (which is the rotate of line $L_{0}$ through the angle 
$\theta = 4\cdot\frac{ 2 \pi}{9}$ about $\mc{O}$) are tangent to a parabola $\mc{P}$ with focus at $\mc{O} = (0,0)$ and axis of symmetry on the $y$-axis. 
Using basic trigonometry, it is straightforward to show that the directrix of $\mc{P}$ is parallel to the $x$-axis and has equation 
$y = -2 \cos\left(\frac{2 \pi}{9}\right)\cos\left(\frac{4 \pi}{9}\right)$. See Figure \ref{fig:IllustratingTangency} for labels and details.

\begin{figure}[htbp]
\begin{center}
\includegraphics[width = .8\linewidth]{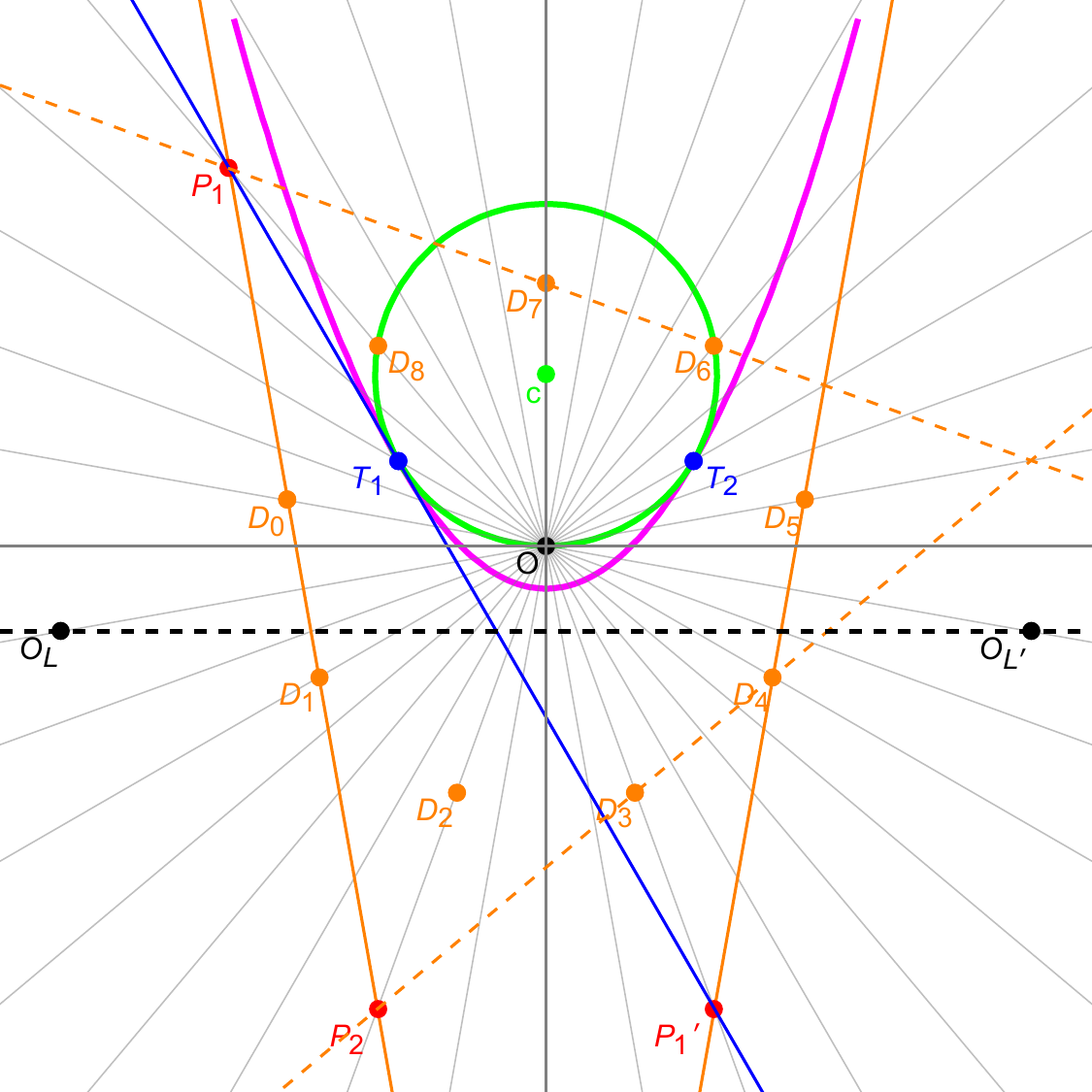}
\caption{Illustrating the proof of Theorem \ref{thm:nonrealizeZ9}. The circle $\mc{C}$ and its center are shown in green; the parabola 
$\mc{P}$ is shown in magenta with its directrix dashed black. The points $D_{i}$ and the lines $L_{0}$, $L_{4}$ are shown in orange, with 
$L_{-3}$ and $L_{3}$ shown in dashed orange. The points of tangency $T_{1}$ and $T_{2}$ between $\mc{P}$ and $\mc{C}$ are shown 
in blue, along with the line connecting $P_{1}$ and $P_{1}'$ (shown in red) that passes through $T_{1}$. The thin gray lines mark out 
angles of $\frac{\pi}{18}$.}
\label{fig:IllustratingTangency}
\end{center}
\end{figure}

Since the general formula of a parabola that opens up, whose axis of symmetry is the $y$-axis, and whose focus at the origin is $ x^2 = 4 p^2 + 4 p y$, 
where $p$ is half the distance from the focus to the directrix, it follows that the parabola $\mc{P}$ has equation
\begin{align*} x^{2} &= 4y\cos\left(\frac{2 \pi}{9}\right)\cos\left(\frac{4 \pi}{9}\right) + 4
\cos^{2}\left(\frac{2 \pi}{9}\right)\cos^{2}\left(\frac{4 \pi}{9}\right)\\
&= 4 y \sin \left(\frac{\pi }{18}\right) \cos
   \left(\frac{\pi }{9}\right)+4 \sin ^2\left(\frac{\pi
   }{18}\right) \cos ^2\left(\frac{\pi }{9}\right)
 \end{align*}
 after simplification.

\newpage

It is straightforward to show that the circle $\mc{C}$ passing through $D_{-1}, D_{-3}, \mc{O}$ has center  $c = (0, \frac{1}{2 \cos(2 \pi/9)} )$ and equation
\[
x^2+y^2-y \sec \left(\frac{2 \pi }{9}\right)=0.
\]

Define $F_{0}(t) = (1-t) D_{0} + t D_{1}$ to be a variable point on line $L_{0}$, and define its rotate $F_{4}(t) = (1-t) D_{4} + t D_{5}$; that is, $F_{4}(t)$ is 
the rotate of $F_{0}(t)$ through $\frac{4 \cdot 2 \pi}{9}$ about $\mc{O}$. As in Algorithm \ref{alg:Z9construction}, define $N_{0}(t) = F_{0}(t)F_{4}(t)$. 
By Lemma~\ref{lem:parab}, this (generic) line $N_{0}(t)$ is tangent to $\mc{P}$. Thus, to investigate which lines $N_{0}(t)$ intersect $\mc{C}$, we 
can first investigate the intersections of the parabola and the circle.
 
Using \emph{Mathematica}, we solve for the intersections of $\mc{C}$ and $\mc{P}$. No solutions would indicate that the circle and the parabola 
have no real intersections; four distinct solutions would indicate that the circle and the parabola intersect transversally; and two distinct solutions 
would show that the circle and the parabola intersect only at two tangent points. It is this third possiblity that turns out to be the case: after simplification, 
the only points of intersection between $\mc{P}$ and $\mc{C}$ are  two points of tangency
\begin{align*}
T_{1} &= \left( -2 \sqrt{3} \sin \left(\frac{\pi }{18}\right) \cos
   \left(\frac{\pi }{9}\right) , 2 \sin \left(\frac{\pi
   }{18}\right) \cos \left(\frac{\pi }{9}\right) \right) \\
   & \text{ and } \\ 
T_{2} &= \left( 2 \sqrt{3} \sin \left(\frac{\pi }{18}\right) \cos
   \left(\frac{\pi }{9}\right) , 2 \sin \left(\frac{\pi
   }{18}\right) \cos \left(\frac{\pi }{9}\right) \right).
\end{align*}

We claim that the tangent line to $T_{1}$ is precisely the line that passes through $P_{1} = L_{-3}\cap L_{0}$ and its rotate $P_{1}'$ through 
$\frac{4\cdot 2 \pi}{9}$ about $\mc{O}$. Elementary right-triangle trigonometry shows that the coordinates of $P_{1}$ and $P_{1}'$ are 
\begin{align*}
 P_{1} &= 2 \cos\left(\frac{\pi}{9}\right) \left(-\sin\left(\frac{2 \pi}{9}\right), \cos\left( \frac{2 \pi}{9}\right) \right) \\
 P_{1}'&= 2 \cos\left(\frac{\pi}{9}\right)\left(\sin \left(\frac{\pi}{9}\right), -\cos\left(\frac{\pi}{9}\right)\right)
 \end{align*}

Computing $\det \bpm P_{1} & P_{1}' & T_{1} \\
1 & 1 & 1 \epm$ and verifying, by \emph{Mathematica}, that the determinant equals 0 shows that $P_{1}$, $P_{1}'$ and $T_{1}$ are collinear. 
Since (by construction of $P_{1}'$) the line $\overline{P_{1}P_{1}'}$ is a member of the envelope of lines to the parabola,  it follows that 
$\overline{P_{1}P_{1}'}$ is tangent to $\mc{P}$ at the point $T_{1}$. 

Symmetry of the construction shows that if we use $P_{2} = L_{0}\cap L_{3}$ as the starting point on line $L_{0}$, that the corresponding line 
$P_{2}P_{2}'$ (with $P_{2}'$ the rotate of $P_{2}$ through $\frac{4\cdot 2 \pi}{9}$ about $\mc{O}$) is tangent to $\mc{P}$ at $T_{2}$.

In summary, there are precisely two points (namely, $P_{1}$ and $P_{2}$) on $L_{0}$ that can serve as the points $F_{0}(t)$ that have the property 
that the line $N_{0}(t)$ intersects $\mc{C}$ at some point $E_{0}$, following the labeling from Algorithm \ref{alg:Z9construction}, namely 
$E_{0} = T_{1}$ if $F_{0}(t) = P_{1}$, or $E_{0} = T_{2}$ if $F_{0}(t) = P_{2}$. 

However, each of these possible points $F_{0}(t)$ has the property that in addition to the line $L_{0}$ passing through them, another line $L_{i}$ 
also passes through them. 
\end{proof}

\begin{remark}
In fact, choosing either of these points as $F_{0}$ and completing Algorithm \ref{alg:Z9construction} (see Figure \ref{fig:Z9celestial}, which uses the 
points $D_{i}$ rotated back so that $D_{0} = (1,0)$) results in four points lying on each line and four lines passing through each point; the resulting 
incidence structure is actually the $(27_{4})$ celestial configuration $9\#(1,3;4,3;2,3)$) (see, e.g., \cite[Section 3.7]{Gru2009b} for details on 3-celestial 
configurations, where they are called $3$-astral configurations). These extra incidences mean that the construction from Algorithm~\ref{alg:Z9construction} 
produces only a weak realization of the Gray configuration.
\end{remark}

A pseudoline realization of a configuration is a drawing of a configuration in which lines are allowed to ``wiggle'' but any two can intersect at most once. 
(In the projective plane, any two pseudolines intersect exactly once.) See, for example, \cite{Gru2009b, Ber2008, HandDiscCompGeom2018}. 
Two pseudoline drawings (topological realizations) of the Gray Configuration with $\mathbb{Z}_{9}$ symmetry are shown in Figure~\ref{fig:Z9realization}. 
The left-hand figure indicates what the weak realization would be with straight lines, via the celestial configuration $9\#(1,3;4,3;2,3)$, but the extra incidences 
are avoided with pseudolines using semicircular paths around the unwanted vertices. Note this drawing was also shown in \cite[Figure 3]{MaPiWi2005}; 
Theorem~\ref{thm:nonrealizeZ9} shows that this configuration is the only straight-line (weak) realization possible of the Gray graph with $\mathbb{Z}_{9}$ 
symmetry. The right-hand figure shows a pseudoline realization in which two orbits of lines are straight, and the third orbit uses pseudolines consisting of 
circular arcs in the area shown.

\begin{figure}[htbp]
\begin{center}
\begin{subfigure}[t]{.48\linewidth}
\includegraphics[width = \linewidth]{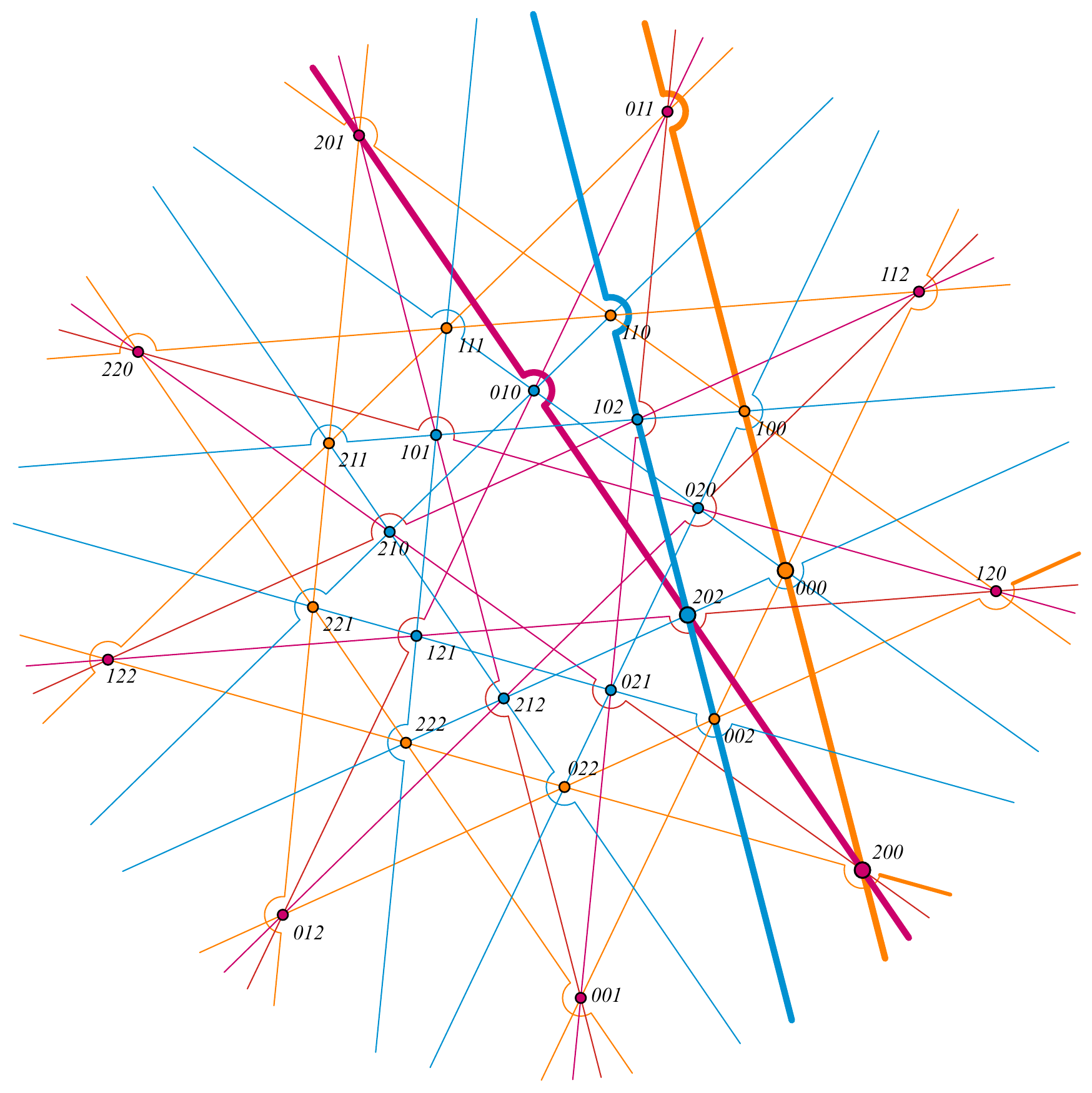}
\caption{A pseudoline drawing of the Gray configuration based on the $(27_{4})$ celestial configuration $9\#(1,3;4,3;2,3)$, avoiding the extra incidences with semicircles. 
Compare Figure 3 from \cite{MaPiWi2005}. 
}
\label{fig:Z9celestial}
\end{subfigure}
\hfill
\begin{subfigure}[t]{.48\linewidth}
\includegraphics[width = \linewidth]{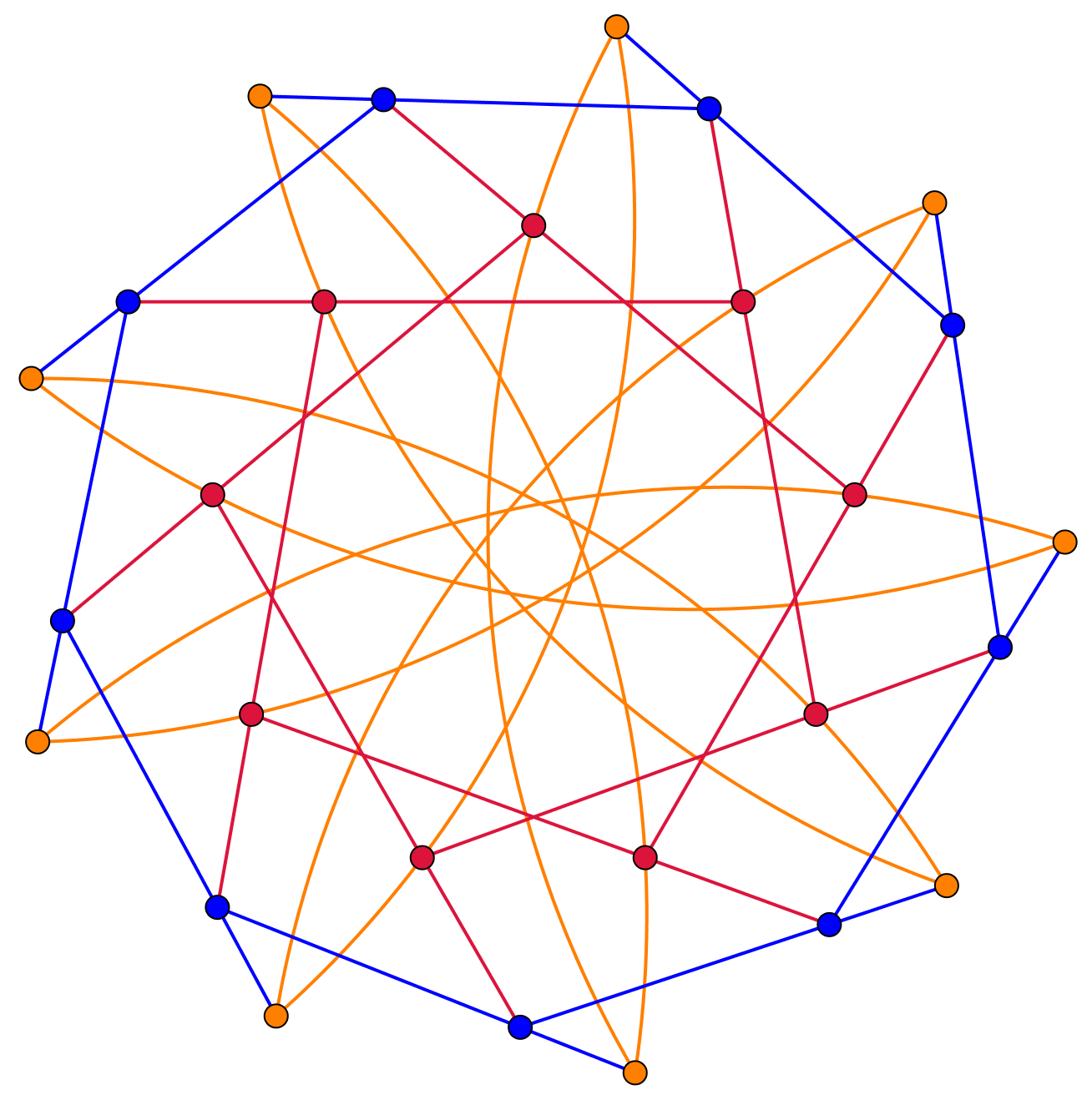}
\caption{A second pseudoline drawing, avoiding extra incidences.}
\end{subfigure}
\caption{Pseudoline realizations of the Gray configuration with $\mathbb{Z}_{9}$ symmetry.}
\label{fig:Z9realization}
\end{center}
\end{figure}

\section{Conclusion and open questions}

We have showed that among all possible polycyclic realizations of the Gray configuration, it is possible to realize both $\mathbb{Z}_{3}$ versions, 
but the $\mathbb{Z}_{9}$ realization is only topological.

There are other semisymmetric cubic bipartite graphs of girth at least 6, including the Iofinova-Ivanov graph on 110 vertices (corresponding to a $(55_{3})$ 
configuration) \cite{IvaIof1985}; the Ljubljana graph with 112 vertices (corresponding to a combinatorial $(56_{3})$ configuration \cite{CoMaMaPi2005}; the 
Tutte 12-cage, also known as the Benson Graph (corresponding to a combinatorial $(63_{3})$ configuration) with 126 vertices \cite{Ben1966, ExoJaj2008}.  
A polycyclic geometric realization of the Ljubljana configuration with 7-fold symmetry was given in \cite{CoMaMaPi2005}, and a polycyclic geometric 
realization of the configuration corresponding to the Tutte 12-cage has been shown in \cite{BoGrPi2013}. However,  although \cite{Wei} provides symmetric 
graph drawings, we are not aware of a published geometric realization of the Iofinova-Ivanov $(55_{3})$ configuration or its dual.  We plan to investigate the 
geometric realizability of the corresponding configuration; the automorphism group of the graph is isomorphic to  $\text{PGL}_2(11)$. Future work will explore 
the possibility of realizations of this configuration with 5-fold and 11-fold rotational symmetry.

Recently, Conder and Poto\v{c}nik presented a method that enabled them to generate all semisymmetric cubic bipartite graphs of order up to 10000  \cite{CoPotoapear}. 
It would be interesting if one could use our method with their census of examples to develop infinite families of geometrically realizable configurations.

%%%%%%%%%%%%%%%%%%%%%%%%%%%%%%%%%%%
 
\section*{Acknowledgements}
G\'abor G\'evay is supported by the Hungarian National Research, Development and Innovation Office, OTKA grant 
No.\ SNN 132625.
Toma\v{z} Pisanski is supported in part by the Slovenian Research Agency (research program P1-0294 and research 
projects J1-4351, J5-4596, BI-HR/23-24-012).

\bibliography{GrayPolycirculant-Feb28tmp}

\begin{thebibliography}{10}

\bibitem{Ben1966}
Clark~T. Benson.
\newblock Minimal regular graphs of girths eight and twelve.
\newblock {\em Canadian J. Math.}, 18:1091--1094, 1966.

\bibitem{Ber2008}
Leah~Wrenn Berman.
\newblock Symmetric simplicial pseudoline arrangements.
\newblock {\em Electron. J. Combin.}, 15(1):Research Paper 13, 31, 2008.

\bibitem{Ber2013}
Leah~Wrenn Berman.
\newblock Geometric constructions for 3-configurations with non-trivial
  geometric symmetry.
\newblock {\em Electron. J. Combin.}, 20(3):Paper 9, 29, 2013.

\bibitem{BerGevPis2025}
Leah~Wrenn Berman, G\'{a}bor G\'{e}vay, and Toma\v{z} Pisanski.
\newblock The {G}ray graph is a unit-distance graph.
\newblock {\em The {A}rt of {D}iscrete and {A}pplied {M}athematics}, accepted,
  2025.

\bibitem{BoGrPi2013}
Marko Boben, Branko Gr\"{u}nbaum, and Toma{\v z} Pisanski.
\newblock Multilaterals in configurations.
\newblock {\em Beitr. Algebra Geom.}, 54(1):263--275, 2013.

\bibitem{Bou1972}
Izak~Zurk Bouwer.
\newblock On edge but not vertex transitive regular graphs.
\newblock {\em J. Combin Theory Ser. B.}, 12:32--40, 1972.

\bibitem{CoMaMaPi2005}
Marston Conder, Aleksander Malni{\v c}, Dragan Maru{\v s}i\v{c}, Toma{\v z}
  Pisanski, and Primo{\v z} Poto{\v c}nik.
\newblock The edge-transitive but not vertex-transitive cubic graph on 112
  vertices.
\newblock {\em J. Graph Theory}, 50(1):25--42, 2005.

\bibitem{CoPotoapear}
Marston Conder and Primo{\v z} Poto{\v c}nik.
\newblock Edge-transitive cubic graphs: Cataloguing and enumeration.
\newblock \url{https://arxiv.org/abs/2502.02250}, 2025.

\bibitem{CoxGre}
H.~S.~M. Coxeter and S.~L. Greitzer.
\newblock {\em Geometry Revisited}.
\newblock The Mathematical Association of America, Washington, DC, 1967.

\bibitem{DoMaMa}
Ted Dobson, Aleksander Malni{\v{c}}, and Dragan Maru{\v{s}}i{\v{c}}.
\newblock {\em Symmetry in graphs}, volume 198.
\newblock Cambridge University Press, 2022.

\bibitem{ExoJaj2008}
Geoffrey Exoo and Robert Jajcay.
\newblock Dynamic cage survey.
\newblock {\em Electron. J. Combin.}, DS16(Dynamic Surveys):48, 2008.

\bibitem{Fol1967}
Jon Folkman.
\newblock Regular line-symmetric graphs.
\newblock {\em J. Combin Theory}, 3:215--232, 1962.

\bibitem{Gev2013}
G\'abor G\'evay.
\newblock Constructions for large spatial point-line $(n_k)$ configurations.
\newblock {\em Ars Math.\ Contemp.}, 7:175--199, 2013.

\bibitem{Gev2019}
G{\'a}bor G{\'e}vay.
\newblock Resolvable configurations.
\newblock {\em Discrete Appl. Math.}, 266:319--330, 2019.

\bibitem{HandDiscCompGeom2018}
Jacob~E. Goodman, Joseph O'Rourke, and Csaba~D. T\'{o}th, editors.
\newblock {\em Handbook of discrete and computational geometry}.
\newblock Discrete Mathematics and its Applications (Boca Raton). CRC Press,
  Boca Raton, FL, 2018.
\newblock Third edition of [ MR1730156].

\bibitem{GrTu}
Jonathan~L Gross and Thomas~W Tucker.
\newblock {\em Topological graph theory}.
\newblock Courier Corporation, 2001.

\bibitem{Gru2000}
Branko Gr{\"{u}}nbaum.
\newblock Connected {$(n_4)$} configurations exist for almost all {$n$}.
\newblock {\em Geombinatorics}, 10(1):24--29, 2000.

\bibitem{Gru2009b}
Branko Gr{\"u}nbaum.
\newblock {\em Configurations of {P}oints and {L}ines}, volume 103 of {\em
  Graduate Studies in Mathematics}.
\newblock American Mathematical Society, Providence, RI, 2009.

\bibitem{IvaIof1985}
A.~A. Ivanov and M.~E. Iofinova.
\newblock Biprimitive cubic graphs.
\newblock In {\em Investigations in the algebraic theory of combinatorial
  objects ({R}ussian)}, pages 123--134. Vsesoyuz. Nauchno-Issled. Inst. Sistem.
  Issled., Moscow, 1985.

\bibitem{MaPi2000}
Dragan Maru\v{s}i\v{c} and Toma{\v z} Pisanski.
\newblock The {G}ray graph revisited.
\newblock {\em J. Graph Theory}, 35:1--7, 2000.

\bibitem{MaPiWi2005}
Dragan Maru\v{s}i\v{c}, Toma{\v z} Pisanski, and Steve Wilson.
\newblock The genus of the {GRAY} graph is 7.
\newblock {\em Eur. J. Combin.}, 26:377--385, 2005.

\bibitem{Mol1990}
Emil Moln\'ar.
\newblock {\em Elemi matematika II. Geometriai transzform\'aci\'ok (Elementary
  Mathematics II. Geometric Transformations) \rm{(in Hungarian)}}.
\newblock Tank\"onyvkiad\'o, Budapest, 1990.

\bibitem{MoPi2007}
Barry Monson, Toma{\v z} Pisanski, Egon Schulte, and Asia~Ivi\'c Weiss.
\newblock Semisymmetric graphs from polytopes.
\newblock {\em J. Combin Theory Ser. A.}, 114:421--435, 2007.

\bibitem{Pis2007}
Toma{\v z} Pisanski.
\newblock Yet another look at the {G}ray graph.
\newblock {\em New Zealand J. Math.}, 36:85--92, 2007.

\bibitem{PiRa2000}
Toma{\v z} Pisanski and Milan Randi\'c.
\newblock Bridges between geometry and graph theory.
\newblock In C.~A. Gorini, editor, {\em Geometry at Work}, pages 174--194.
  Math. Assoc. America, Washington, DC, 2000.

\bibitem{PisSer2013}
Toma{\v z} Pisanski and Brigitte Servatius.
\newblock {\em Configurations from a {G}raphical {V}iewpoint}.
\newblock Birk\-h{\"a}user Advanced Texts. Birkh{\"a}user, New York, 2013.

\bibitem{Sab1958}
Gert Sabidussi.
\newblock On a class of fixed-point-free graphs.
\newblock {\em Proc. Amer. Math. Soc.}, 9:800--804, 1958.

\bibitem{sagemath}
{The Sage Developers}.
\newblock {\em {S}ageMath, the {S}age {M}athematics {S}oftware {S}ystem
  ({V}ersion 9.6)}, 2022.
\newblock {\tt https://www.sagemath.org}.

\bibitem{Wei}
Eric~W. Weisstein.
\newblock ``{I}ofinova-{I}vanov graphs.''.
\newblock \url{https://mathworld.wolfram.com/Iofinova-IvanovGraphs.html}.

\bibitem{Yag1968}
Isaak~Moiseevich Yaglom.
\newblock {\em Geometric Transformations II}.
\newblock Random House, New York, 1968.

\end{thebibliography}
\bibliographystyle{plain}

\end{document}